\newcommand{\bfE}{{\mathbf{E}}}
\newcommand{\bfH}{{\mathbf{H}}}
\newcommand{\N}{\mathbb{N}}
\newcommand{\bfU}{\mathbf U}
\begin{document}

\title{Modal analysis of a domain decomposition method for Maxwell's equations in a waveguide}

\titlerunning{Modal analysis of a DDM for Maxwell}        %

\author{V. Dolean  \and A. Tonnoir \and P.-H. Tournier  
}

\institute{V. Dolean \at
              Eindhoven University of Technology, PO Box 513, Eindhoven, 5600 MB, The Netherlands \\
              \email{v.dolean.maini@tue.nl}       
 \and 
           A. Tonnoir \at 
              INSA Rouen Normandie, Normandie Univ, LMI UR 3226, F-76000 Rouen, France \\
              \email{antoine.tonnoir@insa-rouen.fr}
           \and
           P.-H. Tournier \at
              Sorbonne Université, Université Paris Cité, CNRS, Inria, Laboratoire Jacques-Louis Lions, LJLL, EPC ALPINES, 4 place Jussieu, Paris F-75005, France \\
              \email{pierre-henri.tournier@sorbonne-universite.fr}      
}

\date{Received: date / Accepted: date}

\maketitle

\begin{abstract}
Time-harmonic wave propagation problems, especially those governed by Maxwell’s equations, pose significant computational challenges due to the non-self-adjoint nature of the operators and the large, non-Hermitian linear systems resulting from discretization. Domain decomposition methods, particularly one-level Schwarz methods, offer a promising framework to tackle these challenges, with recent advancements showing the potential for weak scalability under certain conditions. In this paper, we analyze the weak scalability of one-level Schwarz methods for Maxwell’s equations in strip-wise domain decompositions, focusing on waveguides with general cross sections and different types of transmission conditions such as impedance or perfectly matched layers (PMLs). By combining techniques from the limiting spectrum analysis of Toeplitz matrices and the modal decomposition of Maxwell’s solutions, we provide a novel theoretical framework that extends previous work to more complex geometries and transmission conditions. Numerical experiments confirm that the limiting spectrum effectively predicts practical behavior even with a modest number of subdomains. Furthermore, we demonstrate that the one-level Schwarz method can achieve robustness with respect to the wave number under specific domain decomposition parameters, offering new insights into its applicability for large-scale electromagnetic wave problems.
\keywords{Maxwell’s equations \and Schwarz methods \and Domain decomposition \and Weak scalability \and Waveguide problems \and Limiting spectrum \and Block Toeplitz matrices \and modal decomposition.}

\end{abstract}

\maketitle

\section{Introduction}
Time-harmonic wave problems, particularly those that arise in electromagnetic applications governed by Maxwell’s equations, present significant computational challenges. At the continuous level, these problems involve non-self-adjoint operators when impedance boundary conditions are imposed. When discretized, the number of degrees of freedom must grow with the wave number to mitigate the pollution effect, which means that the numerical wave speed deviates from the exact solution \cite{Babuska:1997:IPE}. This increase in discretization leads to large-scale, non-Hermitian linear systems that are difficult to solve using traditional iterative methods.

During the past two decades, significant advances have been made in the development of efficient solvers and preconditioners for these problems. Among these approaches, domain decomposition methods \cite{Dolean:15:DDM} offer an effective balance between direct and iterative strategies. Enhanced domain decomposition techniques, such as those using optimized transmission conditions, have proven successful for Helmholtz equations \cite{Gander:2007:OSM} and their extensions to Maxwell’s equations \cite{Dolean:2015:ETC,Dolean:09:OSM,ElBouajaji:12:OSM} and elastic wave problems \cite{Brunet:2019:NDD,Mattesi:2019:ABC}. For large-scale problems, robustness in terms of subdomain count and wave number has been achieved by introducing two-level methods that leverage absorptive counterparts of the equations as preconditioners, solved iteratively using domain decomposition techniques \cite{Bonazzoli:2019:ADD,Dolean:2020:IFD,Graham:2017:RRD}.

More recently, an intriguing concept has emerged: achieving weak scalability with one-level Schwarz methods under certain conditions on the problem’s physical and numerical parameters, such as absorption and subdomain size. This ensures that the convergence rate remains stable as the number of subdomains increases, enabling {the computation of} the solution of increasingly complex problems without requiring a coarse space \cite{Gong:2020:DDP,Graham:2020:DDI}. Unlike traditional scalability that pertains to a fixed problem, weak scalability applies to a family of problems, where increasing the number of subdomains facilitates the solution of more challenging instances while maintaining consistent convergence rates. This concept has been explored in computational chemistry \cite{Cances:2013:DDI} and analyzed rigorously using Fourier techniques \cite{Ciaramella:2017:APS}. Extensions of this work to broader geometries and one-level methods have been achieved through variational and propagation-tracking analyses \cite{Ciaramella:2020:OSS}. Such analyses have been generalized only later to complex valued problems, decompositions into multiple subdomains and optimisation of transmission conditions in \cite{Dolean:2023:CFO,Dolean:2020:IFD,Gander:2024:ASM}. 

Notably in \cite{Bootland:2022:APS} authors investigates the convergence properties of one-level parallel Schwarz methods with Robin transmission conditions for time-harmonic wave problems, focusing on 1D and 2D Helmholtz and 2D Maxwell equations. By utilizing the block Toeplitz structure of the Schwarz iteration matrix, the authors provide a novel analysis of the limiting spectrum, showing that weak scalability can be achieved without a coarse space under specific conditions, particularly in strip-wise decompositions commonly found in waveguide problems. {Their analysis, however, was restricted to 1D and 2D Helmholtz and simplified 2D Maxwell settings with Robin conditions. 

Our work extends this framework to full Maxwell waveguides of arbitrary cross-section, where the modal decomposition into TE/TM/TEM modes is essential.} The main difficulty in extending these results to Maxwell’s equations lies in the vector nature of the problem and in the coupling between field components through the curl–curl operator. As a consequence, the Schwarz iteration cannot be analyzed directly using scalar techniques. The key idea of this work is to show that, in a waveguide geometry, the modal decomposition into TE, TM, and TEM modes diagonalizes not only the continuous Maxwell operator but also the interface transmission operators used in the Schwarz method (see also \cite{Dolean:2015:ETC}). This property allows us to reduce the analysis of the vector Maxwell problem to a family of independent scalar problems at the modal level. As a result, the Schwarz iteration matrix for Maxwell’s equations can be written in a block Toeplitz form identical to the one obtained for the Helmholtz equation, which makes it possible to apply limiting spectrum techniques to the full Maxwell system.

{The results of these paper can also be seen as the first analysis of weak scalability of one-level Schwarz methods for Maxwell’s equations in waveguides of general cross-section, using a combination of Toeplitz spectrum analysis and modal decomposition. Unlike earlier studies limited to simplified configuration, our framework covers general transmission conditions (impedance, PML) and fully accounts for the vector nature of Maxwell’s equations.}
While previous studies dedicated to Helmholtz problems focused on wave number robustness \cite{Gong:2020:DDP,Graham:2020:DDI}, our emphasis is on scalability over a growing chain of subdomains with fixed size, independent of discretisation. This approach provides new insights into the efficiency and applicability of domain decomposition methods for large-scale electromagnetic wave problems. The main contributions of this paper are:
\begin{itemize}
\item We show that, in waveguide geometries, the Schwarz method for Maxwell’s equations can be analyzed through a modal decomposition, which reduces the vector problem to a family of independent scalar problems.

\item We establish that the resulting Schwarz iteration has a block Toeplitz structure identical to that arising in the scalar Helmholtz case, revealing a precise correspondence between Maxwell and Helmholtz problems.

\item This correspondence allows us to extend limiting spectrum and weak scalability results, previously known for Helmholtz problems, to the full Maxwell system in waveguides of general cross-section and for a broad class of transmission conditions, including impedance and PML operators.

\item We identify regimes in which one-level Schwarz methods for Maxwell’s equations are weakly scalable, and we show how transmission conditions and absorption influence this behavior.

\item Numerical experiments confirm that the modal and limiting spectrum analysis accurately predicts the convergence and scalability of practical domain decomposition solvers for discretized Maxwell problems.
\end{itemize}
The main message of this paper is that, in waveguide geometries, regardless of the cross section or transmission conditions, the Schwarz method for Maxwell’s equations behaves mode-by-mode like the Schwarz method for the Helmholtz equation, which allows weak scalability results to be transferred from scalar to vector wave problems.

The structure of the paper is as follows: in Section \ref{sec:Modal} we recall elements of modal analysis for Maxwell's equations in a waveguide of general cross sections. In Section \ref{sec:DD} we present and analyse the domain decomposition method with different types of transmission conditions relying on the modal decomposition and limiting spectrum then show that the limiting spectrum is descriptive for the convergence of the algorithm even in the case of a moderate number of domains. Finally in Section \ref{sec:numeric} we illustrate our findings on a series of numerical results on the discretised problem using the edge element method.

\section{Modal decomposition in a waveguide}
\label{sec:Modal}

In a straight waveguide, solutions of Maxwell's equations can be decomposed into elementary propagating or evanescent modes. This modal structure is classical in waveguide theory and will be the main tool used later to analyze the Schwarz iteration. More precisely, the geometry allows one to separate the longitudinal variable $x$ from the transverse variables $(y,z)$, which reduces the three-dimensional Maxwell system to spectral problems posed in the cross-section of the waveguide. The corresponding modal families are the transverse electric (TE), transverse magnetic (TM), and transverse electromagnetic (TEM) modes.

In what follows, we consider the time-harmonic Maxwell equations with time convention $e^{-\imath\omega t}$:
\begin{equation}\label{Maxwell-eq}
\left\{
\begin{array}{lll}
\nabla\times \bfE = \imath\omega\mu \bfH & \text{in} & \Omega,\\
\nabla\times \bfH = -\imath\omega\varepsilon \bfE & \text{in} & \Omega,\\
\bfE\times \mathbf n = \mathbf 0 & \text{on} & \partial\Omega,
\end{array}
\right.
\qquad \Leftrightarrow \qquad
\left\{
\begin{array}{lll}
\nabla\times\nabla\times \bfE - k^2 \bfE = 0 & \text{in} & \Omega,\\
\bfE\times \mathbf n = \mathbf 0 & \text{on} & \partial\Omega,
\end{array}
\right.
\end{equation}
where $k^2=\varepsilon\mu\omega^2$, $\omega > 0$ is the pulsation, $\varepsilon=\varepsilon'-\imath\varepsilon''$ is the (possibly complex) electric permittivity, and $\mu$ is the magnetic permeability. Throughout the paper, $\varepsilon$ and $\mu$ are assumed constant. We shall mainly work with the second-order formulation, while the first-order system will be used to reconstruct the full fields from their longitudinal components. The domain is the straight infinite waveguide
\(
\Omega=\mathbb R\times S,
\)
where $S$ is the cross-section; see Figure~\ref{fig1:schema}. On the boundary we impose perfect electric conductor (PEC) conditions. We follow the classical modal analysis presented in \cite{Bonnet:2021:PGO}.

\begin{definition}
A \emph{mode} is a particular solution of Maxwell's equations of the separated form
\[
\bfE(x,y,z)=e^{\imath\beta x}\widehat{\bfE}(y,z),
\qquad
\bfH(x,y,z)=e^{\imath\beta x}\widehat{\bfH}(y,z),
\]
where $\beta\in\mathbb C$ is the longitudinal propagation constant and
$\widehat{\bfE},\widehat{\bfH}$ are vector fields defined on the cross-section $S$.
\end{definition}
\begin{figure}[h]
	\centering
	\includegraphics[height=4cm]{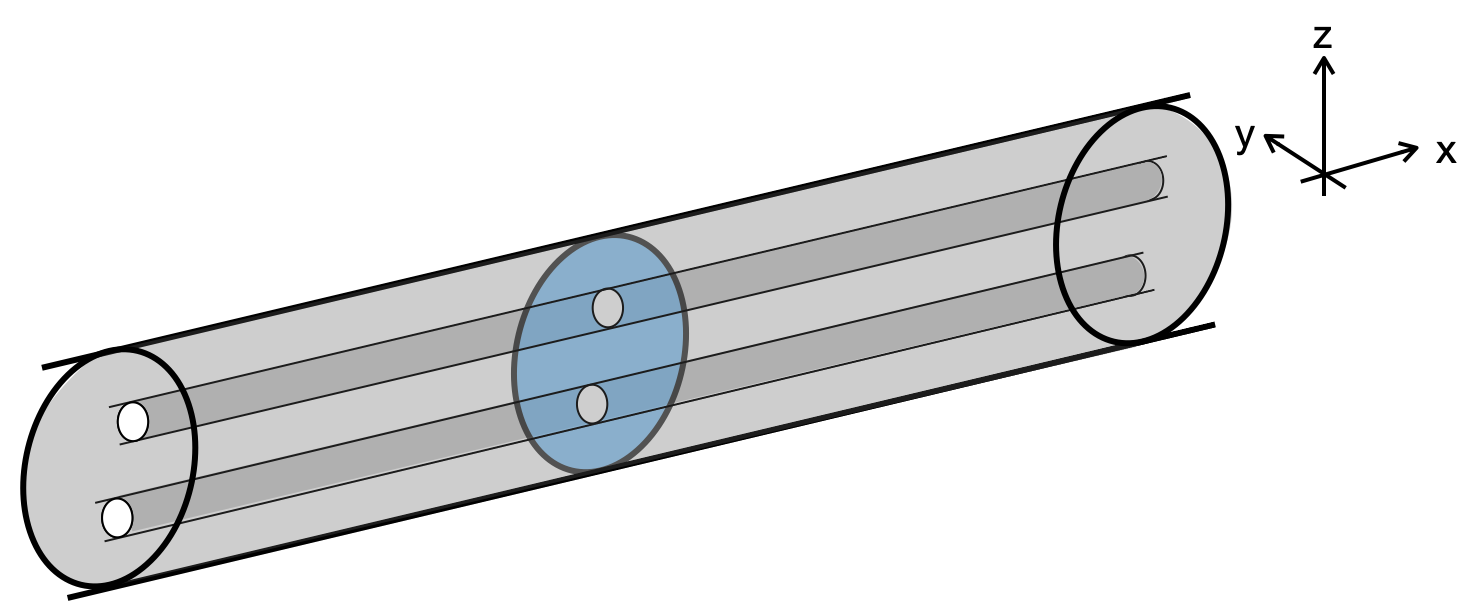}
	\caption{Scheme of the geometry $\Omega$. In blue, cross-sections of the waveguide.}\label{fig1:schema}
\end{figure}
We now characterize the admissible values of $\beta$ and the associated transverse profiles $\widehat{\bfE}$ and $\widehat{\bfH}$ for which such separated solutions satisfy Maxwell's equations. A key observation is that the electric field is divergence free. Indeed, taking the divergence of the second equation in \eqref{Maxwell-eq} gives
\(
\nabla\cdot \bfE = 0.
\)
Using the identity
\[
\nabla\times\nabla\times \bfE = \nabla(\nabla\cdot \bfE)-\Delta \bfE,
\]
it follows that each Cartesian component of $\bfE$ satisfies a scalar Helmholtz equation,
\begin{equation}
-\Delta E_i-k^2E_i=0
\qquad \text{in }\Omega,\qquad i\in\{x,y,z\}.
\end{equation}
In a waveguide, this allows the longitudinal components $E_x$ and $H_x$ to play a distinguished role: once they are known, the transverse components can be reconstructed from the first-order Maxwell system.

The PEC condition $\bfE\times \mathbf n=\mathbf 0$ on $\partial\Omega=\mathbb R\times\partial S$ implies
\begin{equation}
E_x=0 \quad \text{on } \mathbb R\times\partial S,
\qquad
E_y n_z - E_z n_y = 0 \quad \text{on } \mathbb R\times\partial S.
\end{equation}
Therefore, if $E_x=e^{\imath\beta x}\widehat E_x(y,z)$, then $\widehat E_x$ satisfies
\begin{equation}\label{pb:ex-modes}
\begin{array}{|lll}
-\Delta_{(y,z)}\widehat E_x - (k^2-\beta^2)\widehat E_x = 0 & \text{in} & S,\\
\widehat E_x = 0 & \text{on} & \partial S.
\end{array}
\end{equation}
Similarly, using the second-order formulation for $\bfH$, the longitudinal magnetic component $\widehat H_x$ satisfies
\begin{equation}\label{pb:hx-modes}
\begin{array}{|lll}
-\Delta_{(y,z)}\widehat H_x - (k^2-\beta^2)\widehat H_x = 0 & \text{in} & S,\\
\partial_{\mathbf n}\widehat H_x = 0 & \text{on} & \partial S.
\end{array}
\end{equation}

Note that knowing $\widehat{E}_x$ and $\widehat{H}_x$ one can reconstruct the full vector fields $\widehat{\bfE}$ and $\widehat{\bfH}$. Indeed, from the first order formulation of the Maxwell equations, we have:
\begin{equation}\label{eq:syst-recov}
	\begin{array}{|lll}
		\partial_y \widehat{E}_z - \partial_z \widehat{E}_y = \imath \omega \mu \widehat{H}_x,\\
		-\imath \beta \widehat{E}_z + \partial_z \widehat{E}_x = \imath \omega \mu \widehat{H}_y,\\
		\imath \beta \widehat{E}_y - \partial_y \widehat{E}_x = \imath \omega \mu \widehat{H}_z, 
	\end{array} \quad \text{and} \quad \begin{array}{|lll}
		\partial_y \widehat{H}_z - \partial_z \widehat{H}_y = -\imath \omega \varepsilon \widehat{E}_x,\\
		-\imath \beta \widehat{H}_z + \partial_z \widehat{H}_x = -\imath \omega \varepsilon  \widehat{E}_y,\\
		\imath \beta \widehat{H}_y - \partial_y \widehat{H}_x = -\imath \omega \varepsilon  \widehat{E}_z. 
	\end{array}
\end{equation}
Taking now the last two equations of both systems, we get that
\begin{equation}\label{eq:syst-recov-2}
	\left[ \begin{matrix}
		-\imath \omega \varepsilon  & 0 & 0 & \imath \beta \\
		0 & -\imath \omega \varepsilon  & -\imath \beta & 0 \\ 
		0 & \imath \beta & \imath \omega \mu & 0 \\
		-\imath \beta & 0 & 0 & \imath \omega \mu \\
	\end{matrix} \right] \left[ \begin{matrix}
		\widehat{E}_y \\ \widehat{E}_z \\ \widehat{H}_y \\ \widehat{H}_z
	\end{matrix} \right] = \left[ \begin{matrix} \partial_z \widehat{H}_x \\ - \partial_y \widehat{H}_x \\  \partial_z \widehat{E}_x \\ - \partial_y \widehat{E}_x \end{matrix} \right].
\end{equation}
We can easily see that the determinant of the matrix is not zero iff $k^2 - \beta^2 \neq 0$ and in this case the components $(\widehat{E}_{y},\widehat{E}_{z})$ and $(\widehat{H}_{y},\widehat{H}_{z})$ are uniquely defined. 

The two scalar problems \eqref{pb:ex-modes}--\eqref{pb:hx-modes} show that the modal structure is governed by Dirichlet and Neumann eigenproblems on the cross-section $S$. Depending on whether the longitudinal electric and magnetic components vanish, one obtains three mutually exclusive families of modes:
\begin{itemize}
\item Transverse Electric (TE) modes: $\widehat E_x=0$, $\widehat H_x\neq 0$;
\item Transverse Magnetic (TM) modes: $\widehat E_x\neq 0$, $\widehat H_x=0$;
\item Transverse EM (TEM) modes: $\widehat E_x=\widehat H_x=0$.
\end{itemize}
We discuss these three cases in turn.

\paragraph{TE modes: $\widehat E_x=0$ and $\widehat H_x\neq 0$.}
In this case, \eqref{pb:hx-modes} shows that $\widehat H_x$ is expanded on the Neumann eigenfunctions of the Laplacian on $S$. Hence the associated modes are of the form
\[
e^{\imath m\beta_i^{\rm TE}x}\varphi_i^{\mathcal N}(y,z),
\qquad
\beta_i^{\rm TE}=\sqrt{k^2-(\lambda_i^{\mathcal N})^2},
\qquad i\in\mathbb N^+,\quad m\in\{-,+\},
\]
where $(\lambda_i^{\mathcal N},\varphi_i^{\mathcal N})$ solves
\begin{equation}
\label{eq:ev}
\begin{array}{|lll}
-\Delta_{(y,z)}\varphi^{\mathcal N} = \lambda^{\mathcal N}\varphi^{\mathcal N} & \text{in} & S,\\[3pt]
\partial_{\mathbf n}\varphi^{\mathcal N}=0 & \text{on} & \partial S.
\end{array}
\end{equation}
The coefficients of this expansion are the \emph{modal amplitudes}, denoted by $A_i^{{\rm TE},m}$. The sign $m$ indicates the propagation direction: $m=+$ for right-going modes and $m=-$ for left-going modes. When $\beta_i^{\rm TE}\in\mathbb R$, the mode is \emph{propagative}; when $\Im(\beta_i^{\rm TE})\neq 0$, it is \emph{evanescent}.

        \begin{remark}
           The Neumann problem \eqref{eq:ev} has a countable family of eigenpairs, with $(\lambda_i^{\mathcal N})_{i\in\mathbb N}$ increasing to $+\infty$, and $(\varphi_i^{\mathcal N})_{i\in\mathbb N}$ forms an orthonormal basis of $L^2(S)$. Moreover, the mode corresponding to the constant eigenfunction $\varphi_0^{\mathcal N}=1$ does not contribute since its modal amplitude vanishes.  Indeed, using the first equation of \eqref{eq:syst-recov} of the left system, we get that:
		\[
			\imath \omega \int_{S} \widehat{H}_x dS = \int_{S}   \partial_y \widehat{E}_z - \partial_z \widehat{E}_y dS = \int_{\partial S} \widehat{E}_z n_y - \widehat{E}_y n_z  = 0, 
		\] 
		according to the BC satisfied by $\bfE$. Then, we can deduce using the orthogonality of the eigenfunctions $\varphi^\mathcal{N}_j$ that necessarily $A^{TE,m}_0 = 0$ because 
        \[
            A^{{\rm TE},m}_0 = \int_{S} \widehat{H}_x \; \varphi^\mathcal{N}_0 dS = 0.  
        \]
        \end{remark}
        
\paragraph{Transverse magnetic (TM) modes: $\widehat E_x\neq 0$ and $\widehat H_x=0$.}
In this case, \eqref{pb:ex-modes} shows that $\widehat E_x$ is expanded on the Dirichlet eigenfunctions of the Laplacian on $S$. The corresponding modes are
\[
e^{\imath m\beta_i^{\rm TM}x}\varphi_i^{\mathcal D}(y,z),
\qquad
\beta_i^{\rm TM}=\sqrt{k^2-(\lambda_i^{\mathcal D})^2},
\qquad i\in\mathbb N^+,\quad m\in\{-,+\},
\]
where
\[
\begin{array}{|lll}
-\Delta_{(y,z)}\varphi^{\mathcal D}=\lambda^{\mathcal D}\varphi^{\mathcal D} & \text{in} & S,\\
\varphi^{\mathcal D}=0 & \text{on} & \partial S.
\end{array}
\]
As for TE modes, $\beta_i^{\rm TM}$ determines whether the mode is propagative or evanescent, and the sign $m$ determines its direction of propagation.
		
\paragraph{Transverse electromagnetic (TEM) modes: $\widehat E_x=\widehat H_x=0$.}
If both longitudinal components vanish, the right-hand side of \eqref{eq:syst-recov-2} is zero. Non-trivial solutions then require
\(
(\beta^{\rm TEM})^2 = k^2.
\)
In a waveguide whose boundary $\partial S$ has $N_C$ connected components, there exist $N_C-1$ independent TEM modes; see \cite{Bonnet:2021:PGO}. In particular, if the cross-section is simply connected, no TEM mode is present.\\

Thus, any solution of Maxwell's equations in the waveguide can be expanded as a superposition of TE, TM, and, when present, TEM modes:
\begin{equation}\label{modal-dec}
	\begin{split}
	\bfE = & \sum_{i \geq 1} \sum_{m \in \{-,+\}} A^{{\rm TE},m}_i e^{\imath m \beta_i^{\rm TE} x}\; \widehat{\bfE}^{{\rm TE},m}_i + A^{{\rm TM},m}_i e^{\imath m \beta_i^{\rm TM} x} \;\widehat{\bfE}^{{\rm TM},m}_i\\
		 & + \sum_{i = 1}^{N_C-1} \sum_{m \in \{-,+\}} A^{{\rm TEM},m}_i e^{\imath m k x} \widehat{\bfE}^{{\rm TEM},m}_i.
	\end{split}	 
\end{equation}	
Here $\widehat{\bfE}_i^{{\rm TE},m}$, $\widehat{\bfE}_i^{{\rm TM},m}$, and $\widehat{\bfE}_i^{{\rm TEM},m}$ are the corresponding mode profiles on the cross-section $S$, while $A_i^{{\rm TE},\pm}$, $A_i^{{\rm TM},\pm}$, and $A_i^{{\rm TEM},\pm}$ are the associated modal amplitudes. The profiles are reconstructed from the longitudinal components through \eqref{eq:syst-recov-2}. This modal expansion will be the basis of the domain decomposition analysis in Section~\ref{sec:DD}, since it allows the Schwarz transmission operators to be analyzed mode by mode.

\begin{remark}
TEM modes are necessarily propagative, whereas TE and TM modes may be either propagative or evanescent depending on the sign of $k^2-(\lambda_i)^2$. In the remainder of the paper, we assume that the frequency is not at cut-off, namely that
\[
\beta_i^{\rm TE}\neq 0 \quad \text{and} \quad \beta_i^{\rm TM}\neq 0
\]
for all $i \in \N^+$. This excludes (for TE and TM modes) the degenerate case $k^2-\beta^2=0$, for which the reconstruction of the transverse components from \eqref{eq:syst-recov-2} is singular.
\end{remark}

The next result shows that the mode profiles can be chosen so that their tangential traces on a cross-section are independent of the propagation direction. This property will be crucial in the analysis of Schwarz transmission conditions, since the interface operators act on tangential traces.

\begin{proposition}[Tangential traces of mode profiles]\label{prop:non-depend-pm}
For $\mathbf n=(1,0,0)^t$, the mode profiles can be chosen in such a way that
\[
\widehat{\bfE}^{\rm TE,+}_i\times \mathbf n
=
\widehat{\bfE}^{\rm TE,-}_i\times \mathbf n,
\qquad
\widehat{\bfE}^{\rm TM,+}_i\times \mathbf n
=
\widehat{\bfE}^{\rm TM,-}_i\times \mathbf n,
\qquad
\forall i\in\mathbb N^+,
\]
and
\[
\widehat{\bfE}^{\rm TEM,+}_i\times \mathbf n
=
\widehat{\bfE}^{\rm TEM,-}_i\times \mathbf n,
\qquad
\forall i\in\{1,\dots,N_C-1\}.
\]
\end{proposition}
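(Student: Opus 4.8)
The plan is to reduce the statement to a parity property of the transverse electric field under $\beta \mapsto -\beta$, and then to exploit the normalization freedom in the mode profiles. First I would note that for $\mathbf n = (1,0,0)^t$ one has $\widehat{\bfE}\times\mathbf n = (0,\widehat E_z,-\widehat E_y)^t$, so the tangential trace sees only the transverse electric components $(\widehat E_y,\widehat E_z)$ and is blind to the longitudinal component $\widehat E_x$. Hence the claim is equivalent to choosing profiles whose transverse parts $(\widehat E_y,\widehat E_z)$ coincide for the $+$ and $-$ modes. Since each profile is built from a scalar ($\widehat E_x=\varphi^{\mathcal D}_i$ for TM, $\widehat H_x=\varphi^{\mathcal N}_i$ for TE, a cross-sectional potential for TEM) that solves a $\beta$-independent, homogeneous cross-sectional problem and is therefore defined only up to a multiplicative constant, it suffices to prove that the two transverse traces are proportional; the constant can then be absorbed into the modal amplitude of the $-$ mode.

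For the TE and TM modes I would carry this out through the recovery system \eqref{eq:syst-recov-2}. In the natural construction the generating scalar is the same $\beta$-independent eigenfunction for both signs of $m$, so the only $\beta$-dependence of $(\widehat E_y,\widehat E_z)$ enters through the matrix of \eqref{eq:syst-recov-2}, whose determinant is a function of $k^2-\beta^2$ and hence invertible away from cut-off. Solving the system (it decouples into two $2\times 2$ blocks) one reads off the parity directly: for the TM modes the transverse electric field is proportional to $\imath\beta\,\nabla_{(y,z)}\widehat E_x$, which is \emph{odd} in $\beta$, so $\widehat{\bfE}^{\rm TM,-}_i\times\mathbf n=-\,\widehat{\bfE}^{\rm TM,+}_i\times\mathbf n$ and one flips the sign of the $-$ profile; for the TE modes the transverse electric field depends on $\beta$ only through $\beta^2$, so it is \emph{even} and the two traces already coincide. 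In both cases the traces are proportional, which is what the first step requires.

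The delicate case, and the one I expect to be the main obstacle, is the TEM mode, because there $(\beta^{\rm TEM})^2=k^2$ makes the matrix of \eqref{eq:syst-recov-2} singular; its right-hand side also vanishes, so the transverse fields are no longer recovered by inverting that system. Here I would instead use the structural fact that the transverse electric field of a TEM mode is a gradient field $\nabla_{(y,z)}\psi$, where $\psi$ solves a cross-sectional (electrostatic) potential problem that does not involve $\beta$; consequently $(\widehat E_y,\widehat E_z)$ is literally the same for the two propagation directions, the direction-dependence residing only in the transverse magnetic field through the impedance relation, which is invisible to $\widehat{\bfE}\times\mathbf n$. This gives equality of the TEM tangential traces with no rescaling. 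Collecting the three cases — rescaling the $-$ profile by $-1$ for TM, and the automatic equality for TE and TEM — yields the claimed identities and completes the proof.
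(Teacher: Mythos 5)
Your proof is correct and follows essentially the same route as the paper: reduce everything to the transverse components via $\widehat{\bfE}\times\mathbf{n}=(0,\widehat{E}_z,-\widehat{E}_y)^t$, read off the parity in $\beta$ from the recovery system \eqref{eq:syst-recov-2} (even in $\beta$ for TE, odd for TM), and absorb the TM sign flip into the normalization freedom of the mode profile. Your TEM argument via the $\beta$-independent cross-sectional potential is in fact more explicit than the paper's, which simply asserts that the TE argument carries over, even though the recovery-system inversion degenerates there.
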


\begin{proof}
For any vector field $\bfU$, one has
\begin{equation}\label{eq:cross-prod}
\bfU\times \mathbf n=
\begin{bmatrix}
0\\ U_z\\ -U_y
\end{bmatrix}.
\end{equation}
Hence the tangential trace on a cross-section depends only on the transverse components $U_y$ and $U_z$.

For TE modes, we have $\widehat E_x=0$. Using \eqref{eq:syst-recov}, we obtain
\[
\widehat E_y=\frac{\imath\omega\mu\,\partial_z\varphi_i^{\mathcal N}}{\lambda_i^{\mathcal N}},
\qquad
\widehat E_z=-\frac{\imath\omega\mu\,\partial_y\varphi_i^{\mathcal N}}{\lambda_i^{\mathcal N}}.
\]
These expressions do not depend on the sign of the propagation constant. Therefore the TE profiles can be chosen so that
\[
\widehat{\bfE}^{\rm TE,+}_i\times \mathbf n
=
\widehat{\bfE}^{\rm TE,-}_i\times \mathbf n.
\]

The same argument applies to TEM modes, for which the longitudinal component also vanishes.

For TM modes, one has $\widehat H_x=0$, and \eqref{eq:syst-recov} yields
\[
\widehat E_y=\frac{\imath \beta_i^{\rm TM}\,\partial_y\varphi_i^{\mathcal D}}{\lambda_i^{\mathcal D}},
\qquad
\widehat E_z=\frac{\imath \beta_i^{\rm TM}\,\partial_z\varphi_i^{\mathcal D}}{\lambda_i^{\mathcal D}}.
\]
Here the transverse components do depend on the sign of $\beta_i^{\rm TM}$. However, the modal profiles are defined only up to a non-zero multiplicative constant. We can therefore choose $\widehat{\bfE}^{\rm TM,+}_i$ and $\widehat{\bfE}^{\rm TM,-}_i$ so that their transverse components coincide, and thus
\[
\widehat{\bfE}^{\rm TM,+}_i\times \mathbf n
=
\widehat{\bfE}^{\rm TM,-}_i\times \mathbf n.
\]
This concludes the proof.
\end{proof}

\begin{remark}
For TE and TEM modes, the profiles could in fact be chosen independently of the sign $\pm$. We nevertheless keep the notation $\widehat{\bfE}^{\mathrm T,\pm}_i$ for notational uniformity across the three modal families.
\end{remark}

To summarize, TE modes are associated with a Neumann eigenproblem on the cross-section, TM modes with a Dirichlet eigenproblem, and TEM modes arise only when the cross-section is multiply connected. TE and TM modes may be propagative or evanescent, whereas TEM modes are always propagative. More importantly for the sequel, the modal profiles can be normalized so that their tangential traces are independent of the propagation direction. This modal structure will allow the Schwarz interface operators to be diagonalized mode by mode; see Table~\ref{tab:notation-modal} for a summary of the notations throughout this section.

{\begin{table}[t]
\centering
\scriptsize
\setlength{\tabcolsep}{3pt}
\renewcommand{\arraystretch}{1.08}
\begin{tabular}{@{}p{0.32\linewidth}p{0.62\linewidth}@{}}
\hline
\textbf{Notation} & \textbf{Meaning} \\
\hline
$\lambda_i^{\mathcal N},\ \lambda_i^{\mathcal D}$ & Neumann/Dirichlet eigenvalues on $S$; eigenfunctions $\varphi_i^{\mathcal N},\varphi_i^{\mathcal D}$. \\
$\beta_i^{\rm TE}=\sqrt{k^2-(\lambda_i^{\mathcal N})^2}$ & TE axial wavenumber; propagative if $\mathrm{Im}\,\beta=0$, evanescent if not. \\
$\beta_i^{\rm TM}=\sqrt{k^2-(\lambda_i^{\mathcal D})^2}$ & TM axial wavenumber; propagative if $\mathrm{Im}\,\beta=0$, evanescent if not. \\
$\beta^{\rm TEM}=k$ & TEM axial wavenumber; \#modes $=N_C-1$ if $\partial S$ has $N_C$ components. \\
$\mathbf E_i^{\mathrm T,\pm}$, $A_i^{\mathrm T,\pm}$ & Mode $i$ and modal amplitude, $\mathrm T\in\{\mathrm{TE},\mathrm{TM},\mathrm{TEM}\}$. $\pm$ indicates the direction of propagation of the mode. \\
\hline
\end{tabular}
\caption{Modal notation (used in \S\ref{sec:Modal}).}
\label{tab:notation-modal}
\end{table}}

\section{Domain decomposition algorithm}
\label{sec:DD}

Section~\ref{sec:Modal} showed that Maxwell solutions in a waveguide admit a modal decomposition into TE, TM, and TEM modes. We now use this structure to analyze a one-level overlapping Schwarz method on a bounded waveguide. The key point is that the interface operators act diagonally on modal traces. As a consequence, the Schwarz iteration decouples mode by mode and reduces, for each fixed mode, to a one-dimensional nearest-neighbor recurrence between left- and right-going amplitudes. This yields a block Toeplitz iteration matrix, to which limiting-spectrum results can be applied.
We consider the truncated waveguide
\(
\widetilde\Omega=[a,b]\times S,
\)
equipped with suitable boundary conditions at the two end cross-sections $\{a\}\times S$ and $\{b\}\times S$.

{\begin{table}[t]
\centering
\scriptsize
\setlength{\tabcolsep}{3pt}
\renewcommand{\arraystretch}{1.08}
\begin{tabular}{@{}p{0.32\linewidth}p{0.62\linewidth}@{}}
\hline
\textbf{Notation} & \textbf{Meaning} \\
\hline
$\widetilde{\Omega}=[a,b]\times S$ & Truncated domain; $\Omega_l=[a_l,b_l]\times S$, $l=1,\dots,N$. \\
$L,\ \delta$ & Core length and overlap: $b_{l-1}-a_l=\delta$, $b_l-a_l=L+2\delta$. \\
$\Gamma_{l,l\pm1}$ & Interfaces $\{x=a_l\}\times S$ and $\{x=b_l\}\times S$; $\mathbf n=(\pm1,0,0)^{\top}$. \\
$\mathcal L,\ \mathcal B,\ \mathcal T$ & Operators in \eqref{pb:Schwarz-ite}--\eqref{eq:IC}; $\mathcal T$ diagonal on modes. \\
$\lambda_i^{\mathrm T}$ & Modal symbol of $\mathcal T$: $\mathcal T(\mathbf E_i^{\mathrm T,\pm}\!\times\mathbf n)=\lambda_i^{\mathrm T}(\mathbf E_i^{\mathrm T,\pm}\!\times\mathbf n)$. \\
\hline
\end{tabular}
\caption{Notation introduced in \S\ref{sec:DD}.}
\label{tab:notation-dd}
\end{table}}

We decompose $\widetilde\Omega$ into $N$ overlapping subdomains
\(
\Omega_l=[a_l,b_l]\times S,
\)
with
\[
a_1=a,\qquad b_N=b,\qquad b_{l-1}-a_l=\delta>0,\qquad b_l-a_l=L+2\delta.
\]
Thus each subdomain has the same cross-section $S$, core length $L$, and overlap $\delta$ with its neighbors; see Figure~\ref{fig1:schemaDD} and Table~\ref{tab:notation-dd}.
	\begin{figure}[h]
		\centering
		\includegraphics[height=4cm]{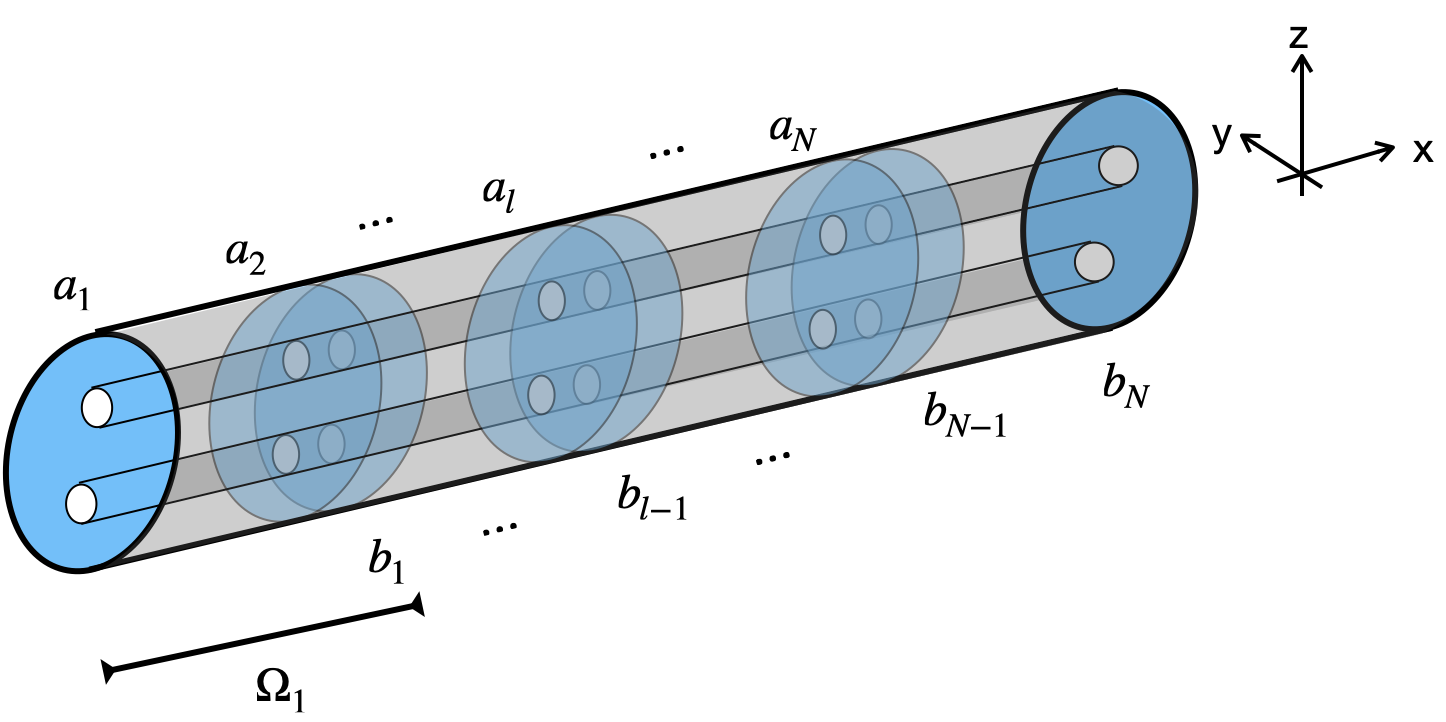}
		\caption{A bounded waveguide $\widetilde{\Omega}$ and its decomposition into subdomains.}\label{fig1:schemaDD}
	\end{figure}
\begin{definition}[Overlapping Schwarz iteration]
Let
\[
\mathcal L(\mathbf E):=\nabla\times\nabla\times \mathbf E-k^2\mathbf E
\]
be the Maxwell operator, and let
\begin{equation}
\label{eq:IC}
\mathcal B(\mathbf E):=((\nabla\times\mathbf E)\times \mathbf n)\times \mathbf n
+\mathcal T(\mathbf E\times\mathbf n)
\end{equation}
be the interface operator, where $\mathcal T$ is a transmission operator to be specified later. Starting from an initial guess, the Schwarz iteration computes, for each subdomain $\Omega_l$ and each iteration $n\ge 1$, a field $\mathbf E^{n,l}$ satisfying
\begin{equation}
\label{pb:Schwarz-ite}
\left\{
\begin{array}{rll}
\mathcal L(\mathbf E^{n,l})=\mathbf f_l & \text{in} & \Omega_l,\\
\mathcal B(\mathbf E^{n,l})=\mathcal B(\mathbf E^{n-1,l-1}) & \text{on} & \Gamma_{l,l-1},\\
\mathcal B(\mathbf E^{n,l})=\mathcal B(\mathbf E^{n-1,l+1}) & \text{on} & \Gamma_{l,l+1},\\
\mathbf E^{n,l}\times\mathbf n=\mathbf 0 & \text{on} & \partial\Omega_l\setminus(\Gamma_{l,l-1}\cup\Gamma_{l,l+1}).
\end{array}
\right.
\end{equation}
At the two ends of the guide we prescribe boundary data $\mathbf g_1$ and $\mathbf g_N$ through
\[
\mathcal B(\mathbf E^{n-1,1})=\mathbf g_1,
\qquad
\mathcal B(\mathbf E^{n-1,N})=\mathbf g_N.
\]
\end{definition}
    
\begin{remark}
For instance, choosing $\mathcal T=-\imath k\,\mathrm{Id}$ yields the classical first-order absorbing (impedance) condition. Such conditions are commonly used both to truncate unbounded waveguides and as transmission conditions in domain decomposition; see, e.g., \cite{haddar:2008:GIBC,hall:2004:SABCMaxwell}. In scattering settings, the end conditions are typically inhomogeneous, so that $\mathbf g_1$ and $\mathbf g_N$ need not vanish.
\end{remark}

In the convergence analysis below, we consider the homogeneous setting
\(
\mathbf f_l=\mathbf 0,\, \mathbf g_1=\mathbf g_N=\mathbf 0,
\)
since the Schwarz iteration for the error satisfies exactly this problem. Our goal is therefore to characterize how the error propagates across interfaces and decays with the iteration count.

The convergence factor of algorithm \eqref{pb:Schwarz-ite} will be computed using a Fourier analysis and different types of transmissions conditions will be considered. 
The central observation is that the interface operator $\mathcal B$ acts diagonally on modal traces. We first prove this for the curl part of $\mathcal B$, and then verify it for the transmission operators used in practice. (see also the analysis in \cite{Dolean:2015:ETC}).

For each modal family ${\rm T}\in\{{\rm TE,TM,TEM}\}$, let
\[
\mathbf E_i^{{\rm T},\pm}(x,y,z):=
e^{\pm \imath \beta_i^{\rm T}x}\,\widehat{\mathbf E}_i^{{\rm T},\pm}(y,z),
\]
with the convention $\beta_i^{\rm TEM}=k$.

\begin{proposition}[Diagonalization of the tangential curl operator]\label{prop:BC-TE-TM}
Let $\mathbf n=(1,0,0)^t$ be the normal to a cross-section of the waveguide. Then the operator
\[
\mathbf E \;\longmapsto\; ((\nabla\times \mathbf E)\times \mathbf n)\times \mathbf n
\]
acts diagonally on TE, TM, and TEM modes. More precisely, for every modal index $i$,
\[
((\nabla\times \mathbf E_i^{{\rm T},\pm})\times \mathbf n)\times \mathbf n
=
\mu_i^{\rm T,\pm}\,(\mathbf E_i^{{\rm T},\pm}\times \mathbf n),
\]
where
\[
\mu_i^{\rm TE,\pm}=\pm \imath \beta_i^{\rm TE},
\qquad
\mu_i^{\rm TM,\pm}=\pm \imath \frac{k^2}{\beta_i^{\rm TM}},
\qquad
\mu_i^{\rm TEM,\pm}=\pm \imath k.
\]
\end{proposition}

\begin{proof}
For any field $\mathbf E=(E_x,E_y,E_z)^t$, one has
\begin{equation}\label{eq:curl-cross-n}
(\nabla\times \mathbf E)\times \mathbf n
=
\begin{bmatrix}
0\\
\partial_x E_y-\partial_y E_x\\
\partial_x E_z-\partial_z E_x
\end{bmatrix}.
\end{equation}

\smallskip
\noindent\textbf{TE and TEM modes.}
For TE modes, $E_x^{\rm TE,\pm}=0$. Since
\[
\mathbf E^{\rm TE,\pm}_i
=
e^{\pm \imath \beta_i^{\rm TE}x}\widehat{\mathbf E}_i^{\rm TE,\pm},
\]
formula \eqref{eq:curl-cross-n} gives
\[
(\nabla\times \mathbf E_i^{\rm TE,\pm})\times \mathbf n
=
\pm \imath \beta_i^{\rm TE}\,\mathbf E_i^{\rm TE,\pm}.
\]
Taking once more the cross product with $\mathbf n$ yields
\[
((\nabla\times \mathbf E_i^{\rm TE,\pm})\times \mathbf n)\times \mathbf n
=
\pm \imath \beta_i^{\rm TE}\,(\mathbf E_i^{\rm TE,\pm}\times \mathbf n).
\]
The same argument applies to TEM modes, since their longitudinal electric component also vanishes and $\beta_i^{\rm TEM}=k$.

\smallskip
\noindent\textbf{TM modes.}
For TM modes, one has $H_x^{\rm TM,\pm}=0$ and
\[
\mathbf H_i^{\rm TM,\pm}
=
e^{\pm \imath \beta_i^{\rm TM}x}\widehat{\mathbf H}_i^{\rm TM,\pm}.
\]
Applying \eqref{eq:curl-cross-n} to $\mathbf H_i^{\rm TM,\pm}$ gives
\[
(\nabla\times \mathbf H_i^{\rm TM,\pm})\times \mathbf n
=
\pm \imath \beta_i^{\rm TM}\,\mathbf H_i^{\rm TM,\pm}.
\]
Using the first-order Maxwell equation
\[
\nabla\times \mathbf H_i^{\rm TM,\pm}
=
-\imath \omega\varepsilon\,\mathbf E_i^{\rm TM,\pm},
\]
we obtain
\[
-\imath \omega\varepsilon\,(\mathbf E_i^{\rm TM,\pm}\times \mathbf n)
=
\pm \imath \beta_i^{\rm TM}\,\mathbf H_i^{\rm TM,\pm}.
\]
Now, from
\[
\nabla\times \mathbf E_i^{\rm TM,\pm}
=
\imath \omega\mu\,\mathbf H_i^{\rm TM,\pm},
\]
it follows that
\[
\nabla\times \mathbf E_i^{\rm TM,\pm}
=
\mp \imath \frac{k^2}{\beta_i^{\rm TM}}\,
(\mathbf E_i^{\rm TM,\pm}\times \mathbf n).
\]
Taking the tangential trace gives
\[
((\nabla\times \mathbf E_i^{\rm TM,\pm})\times \mathbf n)\times \mathbf n
=
\mp \imath \frac{k^2}{\beta_i^{\rm TM}}
\bigl((\mathbf E_i^{\rm TM,\pm}\times \mathbf n)\times \mathbf n\bigr)\times \mathbf n.
\]
Finally, since
\[
\bigl((\mathbf u\times \mathbf n)\times \mathbf n\bigr)\times \mathbf n
=
-\mathbf u\times \mathbf n
\]
for any vector field $\mathbf u$, we conclude that
\[
((\nabla\times \mathbf E_i^{\rm TM,\pm})\times \mathbf n)\times \mathbf n
=
\pm \imath \frac{k^2}{\beta_i^{\rm TM}}\,
(\mathbf E_i^{\rm TM,\pm}\times \mathbf n).
\]
This proves the result.
\end{proof}

\subsection{Transmission conditions}
\label{subsec:TC}

Proposition~\ref{prop:BC-TE-TM} shows that the curl part of the interface operator is diagonal on modal traces. We now verify that the same is true for the transmission operator $\mathcal T$, namely that
\begin{equation}\label{eq:op-T-diag}
\mathcal T(\mathbf E_i^{{\rm T},\pm}\times \mathbf n)
=
\lambda_i^{{\rm T},\pm}\,(\mathbf E_i^{{\rm T},\pm}\times \mathbf n),
\qquad
{\rm T}\in\{{\rm TE,TM,TEM}\}.
\end{equation}
This property is satisfied by the two classes of transmission conditions considered in this paper: impedance conditions and PML-based conditions. We derive below the corresponding modal symbols $\lambda_i^{\rm T}$.

\paragraph{Impedance conditions (first-order absorbing boundary conditions).}
Consider
\begin{equation}\label{def:op-T-ik}
\mathcal T : \mathbf E\times \mathbf n \longmapsto -\imath k\,(\mathbf E\times \mathbf n).
\end{equation}
This is simply the identity operator multiplied by $-\imath k$, and therefore \eqref{eq:op-T-diag} holds immediately with
\[
\lambda_i^{{\rm T},\pm}=-\imath k
\]
for all TE and TM modes ($i\in\mathbb N^+$), and for all TEM modes ($i=1,\dots,N_C-1$). This choice corresponds to the standard first-order absorbing, or impedance, condition entering the interface operator $\mathcal B$.

\paragraph{PML transmission conditions.}
Perfectly matched layers (PMLs) are widely used to absorb outgoing waves in wave propagation problems \cite{bramble2007analysis}. In the present context, they can also be used as transmission conditions in domain decomposition methods \cite{Bootland:2022:NAP,royer2022non}. Their effect is to transform propagative waves into exponentially decaying ones without reflection at the continuous level.

To define the corresponding operator $\mathcal T$, we extend each subdomain $\Omega_j$ by a PML of width $\ell$, truncated by the homogeneous boundary condition
\(\mathbf E\times \mathbf n = 0 \) {at} \(x=b_j+\ell\)  and \(x=a_j-\ell.\)
We use the complex stretching
\[
\widetilde x(x)=
\begin{cases}
x, & x\in[a_j,b_j],\\
x+\imath\sigma(x-b_j), & x\in[b_j,b_j+\ell],\\
x-\imath\sigma(x-a_j), & x\in[a_j-\ell,a_j],
\end{cases}
\]
where $\sigma>0$ is taken constant for simplicity.\footnote{The same analysis extends to a positive increasing PML profile $\sigma(x)$.}

Consider, for instance, the right PML $x\ge b_j$. For each modal family ${\rm T}\in\{{\rm TE,TM,TEM}\}$, the field in the layer is a linear combination of outgoing and reflected modal contributions,
\[
\mathbf E
=
\sum_{T\in\{TE,TM,TEM\}}\sum_{i} A_i^{{\rm T},+}
\left(
e^{\imath \beta_i^{\rm T}(1+\imath\sigma)(x-b_j)}
-
e^{2\imath \beta_i^{\rm T}(1+\imath\sigma)\ell}
e^{-\imath \beta_i^{\rm T}(1+\imath\sigma)(x-b_j)}
\right)
\widehat{\mathbf E}_i^{{\rm T},+},
\]
where $i\in\mathbb N^+$ for ${\rm T}\in\{{\rm TE,TM}\}$ and $i=1,\dots,N_C-1$ for ${\rm T}={\rm TEM}$, with the convention $\beta_i^{\rm TEM}=k$. Evaluating this expression at $x=b_j$ gives
\[
\mathbf E\times \mathbf n
=
\sum_{T\in\{TE,TM,TEM\}}\sum_i
A_i^{{\rm T},+}
\left(
1-
e^{2\imath \beta_i^{\rm T}(1+\imath\sigma)\ell}
\right)
\widehat{\mathbf E}_i^{{\rm T},+}\times \mathbf n.
\]
Using Proposition~\ref{prop:BC-TE-TM}, we also obtain
\[
((\nabla\times \mathbf E)\times \mathbf n)\times \mathbf n
=
\sum_{T\in\{TE,TM,TEM\}}\sum_i
\mu_i^{{\rm T},+}\,
A_i^{{\rm T},+}
\left(
1+
e^{2\imath \beta_i^{\rm T}(1+\imath\sigma)\ell}
\right)
\widehat{\mathbf E}_i^{{\rm T},+}\times \mathbf n.
\]
Hence the PML transmission operator, defined by mapping 
\[\mathbf E\times \mathbf n \longmapsto ((\nabla\times \mathbf E)\times \mathbf n)\times \mathbf n\] 
at the interface, is diagonal on modal traces and its modal symbol is
\begin{equation}\label{eq:lambdaPML-compact}
\lambda_i^{\rm T,+}
=
-\mu_i^{{\rm T},+}\,
\frac{1+e^{2\imath \beta_i^{\rm T}(1+\imath\sigma)\ell}}
     {1-e^{2\imath \beta_i^{\rm T}(1+\imath\sigma)\ell}}.
\end{equation}
Using the values of $\mu_i^{{\rm T},+}$ from Proposition~\ref{prop:BC-TE-TM}, this yields
\begin{align}
\lambda_i^{\rm TE,+}
&=
-\imath \beta_i^{\rm TE}
\frac{1+e^{2\imath \beta_i^{\rm TE}(1+\imath\sigma)\ell}}
     {1-e^{2\imath \beta_i^{\rm TE}(1+\imath\sigma)\ell}},
\\
\lambda_i^{\rm TM,+}
&=
-\imath \frac{k^2}{\beta_i^{\rm TM}}
\frac{1+e^{2\imath \beta_i^{\rm TM}(1+\imath\sigma)\ell}}
     {1-e^{2\imath \beta_i^{\rm TM}(1+\imath\sigma)\ell}},
\\
\lambda_i^{\rm TEM,+}
&=
-\imath k
\frac{1+e^{2\imath k(1+\imath\sigma)\ell}}
     {1-e^{2\imath k(1+\imath\sigma)\ell}}.
\end{align}

By Proposition~\ref{prop:non-depend-pm}, the tangential traces of the mode profiles do not depend on the propagation direction. Therefore the modal symbol is the same for the $+$ and $-$ modes, and in what follows we simply write $\lambda_i^{\rm T}$.

\begin{remark}[Exactness of PML conditions]
If $\sigma>0$ and $\ell\to+\infty$, then
\[
e^{2\imath \beta_i^{\rm T}(1+\imath\sigma)\ell}\to 0,
\]
so that the PML symbol tends to
\(
\lambda_i^{\rm T}=-\mu_i^{{\rm T},+}.
\)
In other words, $\mathcal T$ converges to \emph{the exact Dirichlet-to-Neumann operator}, which provides the transparent boundary condition for the semi-infinite waveguide.
\end{remark}

To summarize, both impedance and PML transmission conditions satisfy the modal diagonalization property \eqref{eq:op-T-diag}. Their modal symbols are explicit and will be the main inputs of the spectral analysis of the Schwarz iteration in the next subsection.

\subsection{Analysis of the Schwarz algorithm}

Because both the tangential curl operator and the transmission operator are diagonal on modal traces, the Schwarz iteration decouples mode by mode. For each fixed mode \(i\), the iteration therefore reduces to a relation between the left- and right-going modal amplitudes on neighboring subdomains. We first derive this per-mode recurrence, then rewrite it as a nearest-neighbor iteration with constant \(2\times 2\) blocks, which reveals the block Toeplitz structure of the global iteration matrix. This will allow us to apply limiting-spectrum results and discuss weak scalability.

\subsubsection{Per-mode interface relations}

For each modal family ${\rm T}\in\{{\rm TE,TM,TEM}\}$, let
\[
\eta_i^{\rm T}:=\mu_i^{{\rm T},+},
\]
that is,
\[
\eta_i^{\rm TE}=\imath\beta_i^{\rm TE},\qquad
\eta_i^{\rm TM}=\imath\frac{k^2}{\beta_i^{\rm TM}},\qquad
\eta_i^{\rm TEM}=\imath k.
\]

\begin{proposition}[Per-mode evolution]\label{prop:mode-ite}
Fix a modal family ${\rm T}\in\{{\rm TE,TM,TEM}\}$ and a mode index $i$. Assume that, in each subdomain $\Omega_l$, the Schwarz iterate at step $n-1$ is a superposition of the two traveling modes
\[
\mathbf E^{n-1,l}
=
e^{\imath \beta_i^{\rm T}x}A_i^{{\rm T},+,n-1,l}\widehat{\mathbf E}_i^{{\rm T},+}
+
e^{-\imath \beta_i^{\rm T}x}A_i^{{\rm T},-,n-1,l}\widehat{\mathbf E}_i^{{\rm T},-}.
\]
Then the next Schwarz iterate in $\Omega_l$ is of the same form,
\[
\mathbf E^{n,l}
=
e^{\imath \beta_i^{\rm T}x}A_i^{{\rm T},+,n,l}\widehat{\mathbf E}_i^{{\rm T},+}
+
e^{-\imath \beta_i^{\rm T}x}A_i^{{\rm T},-,n,l}\widehat{\mathbf E}_i^{{\rm T},-},
\]
and the modal amplitudes satisfy
\begin{equation}\label{eq:iterT-generic}
M_{\rm T}^{l,l}(i)\,\mathbf A^{{\rm T},n,l}
=
M_{\rm T}^{l,l-1}(i)\,\mathbf A^{{\rm T},n-1,l-1}
+
M_{\rm T}^{l,l+1}(i)\,\mathbf A^{{\rm T},n-1,l+1},
\end{equation}
where
\[
\mathbf A^{{\rm T},n,l}
=
\begin{bmatrix}
A_i^{{\rm T},-,n,l}\\[2pt]
A_i^{{\rm T},+,n,l}
\end{bmatrix},
\]
and
\[
M_{\rm T}^{l,l-1}(i):=
\begin{bmatrix}
(\eta_i^{\rm T}+\lambda_i^{\rm T})e^{-\imath\beta_i^{\rm T}a_l}
&
(-\eta_i^{\rm T}+\lambda_i^{\rm T})e^{\imath\beta_i^{\rm T}a_l}
\\
0 & 0
\end{bmatrix},
\]
\[
M_{\rm T}^{l,l+1}(i):=
\begin{bmatrix}
0 & 0
\\
(-\eta_i^{\rm T}+\lambda_i^{\rm T})e^{-\imath\beta_i^{\rm T}b_l}
&
(\eta_i^{\rm T}+\lambda_i^{\rm T})e^{\imath\beta_i^{\rm T}b_l}
\end{bmatrix},
\]
and
\[
M_{\rm T}^{l,l}(i)=M_{\rm T}^{l,l-1}(i)+M_{\rm T}^{l,l+1}(i).
\]
\end{proposition}
\begin{proof}
For readability, we omit the modal index $i$ and the superscript ${\rm T}$. In $\Omega_l$, we seek a solution of the form
\[
\mathbf E^{n,l}
=
e^{\imath \beta x}A^{+,n,l}\widehat{\mathbf E}^{+}
+
e^{-\imath \beta x}A^{-,n,l}\widehat{\mathbf E}^{-}.
\]
Since each modal field solves the homogeneous Maxwell equation, this ansatz satisfies
\[
\mathcal L(\mathbf E^{n,l})=\mathbf 0
\qquad\text{in }\Omega_l.
\]
It therefore remains to enforce the interface conditions. On the left interface $\Gamma_{l,l-1}=\{x=a_l\}\times S$, the outward normal is $\mathbf n=(-1,0,0)^t$. Using the definition of $\mathcal B$, the diagonalization of the tangential curl operator from Proposition~\ref{prop:BC-TE-TM}, and the diagonalization of $\mathcal T$, we obtain
\[
(-\eta+\lambda)e^{\imath\beta a_l}A^{+,n,l}(\widehat{\mathbf E}^{+}\times\mathbf n)
+
(\eta+\lambda)e^{-\imath\beta a_l}A^{-,n,l}(\widehat{\mathbf E}^{-}\times\mathbf n)
\]
\[
=
(-\eta+\lambda)e^{\imath\beta a_l}A^{+,n-1,l-1}(\widehat{\mathbf E}^{+}\times\mathbf n)
+
(\eta+\lambda)e^{-\imath\beta a_l}A^{-,n-1,l-1}(\widehat{\mathbf E}^{-}\times\mathbf n).
\]
By Proposition~\ref{prop:non-depend-pm}, the tangential traces of the $\pm$ profiles coincide, so this simplifies to
\[
(-\eta+\lambda)e^{\imath\beta a_l}A^{+,n,l}
+
(\eta+\lambda)e^{-\imath\beta a_l}A^{-,n,l}
\]
\[
=
(-\eta+\lambda)e^{\imath\beta a_l}A^{+,n-1,l-1}
+
(\eta+\lambda)e^{-\imath\beta a_l}A^{-,n-1,l-1}.
\]
Similarly, on the right interface $\Gamma_{l,l+1}=\{x=b_l\}\times S$, where $\mathbf n=(1,0,0)^t$, we obtain
\[
(\eta+\lambda)e^{\imath\beta b_l}A^{+,n,l}
+
(-\eta+\lambda)e^{-\imath\beta b_l}A^{-,n,l}
\]
\[
=
(\eta+\lambda)e^{\imath\beta b_l}A^{+,n-1,l+1}
+
(-\eta+\lambda)e^{-\imath\beta b_l}A^{-,n-1,l+1}.
\]
These two scalar relations are exactly equivalent to \eqref{eq:iterT-generic} which concludes the proof.
\end{proof}

\begin{remark}
The end conditions at the extremal subdomains are incorporated by setting
\[
M_{\rm T}^{1,0}(i)=0,
\qquad
M_{\rm T}^{N,N+1}(i)=0.
\]
\end{remark}

\subsubsection{From modal amplitudes to a block Toeplitz iteration}

Proposition~\ref{prop:mode-ite} expresses the Schwarz iteration, for each fixed mode, in terms of the modal amplitudes in neighboring subdomains. We now introduce interface variables that eliminate the local amplitudes and reveal the translation-invariant structure of the iteration.

\begin{proposition}[Nearest-neighbor form of the modal iteration]\label{prop:R-ite}
For each modal family ${\rm T}\in\{{\rm TE,TM,TEM}\}$ and each mode index $i$, define
\[
\mathbf R^{{\rm T},n,l}(i):=
M_{\rm T}^{l,l}(i)\,\mathbf A^{{\rm T},n,l}(i)
=
\begin{bmatrix}
R^{{\rm T},-,n,l}(i)\\[2pt]
R^{{\rm T},+,n,l}(i)
\end{bmatrix}.
\]
Then the recurrence \eqref{eq:iterT-generic} can be rewritten as
\begin{equation}\label{eq:R-iteration}
\mathbf R^{{\rm T},n,l}(i)
=
K_{\rm T}^-(i)\,\mathbf R^{{\rm T},n-1,l-1}(i)
+
K_{\rm T}^+(i)\,\mathbf R^{{\rm T},n-1,l+1}(i),
\end{equation}
where the \(2\times 2\) matrices \(K_{\rm T}^\pm(i)\) are independent of the subdomain index \(l\) and are given by
\[
K_{\rm T}^+(i)=
\begin{bmatrix}
0 & 0\\
a_i^{\rm T} & b_i^{\rm T}
\end{bmatrix},
\qquad
K_{\rm T}^-(i)=
\begin{bmatrix}
b_i^{\rm T} & a_i^{\rm T}\\
0 & 0
\end{bmatrix},
\]
with
\begin{equation}\label{eq:aT}
a_i^{\rm T}
=
\frac{\bigl((\lambda_i^{\rm T})^2-(\eta_i^{\rm T})^2\bigr)
\left(e^{\imath\beta_i^{\rm T}(L+\delta)}-e^{-\imath\beta_i^{\rm T}(L+\delta)}\right)}
{\left(\eta_i^{\rm T}+\lambda_i^{\rm T}\right)^2e^{\imath\beta_i^{\rm T}(L+2\delta)}
-
\left(-\eta_i^{\rm T}+\lambda_i^{\rm T}\right)^2e^{-\imath\beta_i^{\rm T}(L+2\delta)}},
\end{equation}
\begin{equation}\label{eq:bT}
b_i^{\rm T}
=
\frac{\left(\eta_i^{\rm T}+\lambda_i^{\rm T}\right)^2e^{\imath\beta_i^{\rm T}\delta}
-
\left(\lambda_i^{\rm T}-\eta_i^{\rm T}\right)^2e^{-\imath\beta_i^{\rm T}\delta}}
{\left(\eta_i^{\rm T}+\lambda_i^{\rm T}\right)^2e^{\imath\beta_i^{\rm T}(L+2\delta)}
-
\left(-\eta_i^{\rm T}+\lambda_i^{\rm T}\right)^2e^{-\imath\beta_i^{\rm T}(L+2\delta)}}.
\end{equation}
\end{proposition}

\begin{proof}
For readability, we omit the modal index \(i\) and the superscript \({\rm T}\). Starting from \eqref{eq:iterT-generic}, we have
\[
M^{l,l}\mathbf A^{n,l}
=
M^{l,l-1}\mathbf A^{n-1,l-1}
+
M^{l,l+1}\mathbf A^{n-1,l+1}.
\]
By definition of \(\mathbf R^{n,l}\), this becomes
\[
\mathbf R^{n,l}
=
M^{l,l-1}\bigl(M^{l-1,l-1}\bigr)^{-1}\mathbf R^{n-1,l-1}
+
M^{l,l+1}\bigl(M^{l+1,l+1}\bigr)^{-1}\mathbf R^{n-1,l+1}.
\]
We therefore set
\[
K^-:=M^{l,l-1}\bigl(M^{l-1,l-1}\bigr)^{-1},
\qquad
K^+:=M^{l,l+1}\bigl(M^{l+1,l+1}\bigr)^{-1}.
\]

It remains to show that these matrices do not depend on \(l\). Since each subdomain has the same length \(L+2\delta\), one has
\[
\det\bigl(M^{l\pm1,l\pm1}\bigr)
=
\left(\eta+\lambda\right)^2e^{\imath\beta(L+2\delta)}
-
\left(-\eta+\lambda\right)^2e^{-\imath\beta(L+2\delta)}
=:D,
\]
which is independent of \(l\). Using the relations
\[
b_{l+1}-b_l=L+\delta,
\qquad
b_l-a_{l+1}=\delta,
\]
a direct computation gives
\[
K^+
=
\frac{1}{D}
\begin{bmatrix}
0 & 0\\
(\lambda^2-\eta^2)\bigl(e^{\imath\beta(L+\delta)}-e^{-\imath\beta(L+\delta)}\bigr)
&
(\eta+\lambda)^2e^{\imath\beta\delta}-(\lambda-\eta)^2e^{-\imath\beta\delta}
\end{bmatrix}.
\]
Similarly, using
\[
b_{l-1}-a_l=\delta,
\qquad
a_l-a_{l-1}=L+\delta,
\]
we obtain
\[
K^-
=
\frac{1}{D}
\begin{bmatrix}
(\eta+\lambda)^2e^{\imath\beta\delta}-(\lambda-\eta)^2e^{-\imath\beta\delta}
&
(\lambda^2-\eta^2)\bigl(e^{\imath\beta(L+\delta)}-e^{-\imath\beta(L+\delta)}\bigr)
\\
0 & 0
\end{bmatrix}.
\]
This yields exactly the coefficients \(a_i^{\rm T}\) and \(b_i^{\rm T}\) in \eqref{eq:aT}--\eqref{eq:bT}.
\end{proof}

The preceding proposition shows that, for each fixed mode, the Schwarz iteration reduces to a nearest-neighbor recurrence with constant \(2\times2\) blocks. This makes explicit the close connection with the scalar Helmholtz case.

\begin{corollary}[Maxwell--Helmholtz dictionary]\label{cor:MH-dictionary}
Let \(a_i^{\rm T}\) and \(b_i^{\rm T}\) be the coefficients defined in Proposition~\ref{prop:R-ite}. Then:
\begin{enumerate}
\item[(a)] \textbf{Impedance transmission.} If \(\mathcal T=-\imath k\,\mathrm{Id}\), then for modes with coincident axial wavenumbers \(\beta_i^{\rm TE}=\beta_i^{\rm TM}\),
\[
a_i^{\rm TE}
=
-\,a_i^{\rm TM},
\qquad
b_i^{\rm TE}
=
b_i^{\rm TM},
\]
after the substitution
\[
\eta_i^{\rm TE}
\longleftrightarrow
\eta_i^{\rm TM}
=
\imath \frac{k^2}{\beta_i^{\rm TM}}.
\]
Hence TE and TM modes have the same limiting convergence factor, since the sign change in \(a_i^{\rm T}\) only exchanges the two quantities
\(
|a_i^{\rm T}+b_i^{\rm T}|
\)
and
\(
|a_i^{\rm T}-b_i^{\rm T}|.
\)

\item[(b)] \textbf{PML transmission.} With the PML symbols from \eqref{eq:lambdaPML-compact}, one has the same correspondence between TE and TM coefficients, again yielding the same limiting convergence factor for the two families.
\end{enumerate}
Moreover, TEM modes behave as TE modes with \(\beta_i^{\rm TEM}=k\). Therefore, mode by mode, the Maxwell iteration reduces to the same block structure as in the Helmholtz case.
\end{corollary}

For each modal family \({\rm T}\in\{{\rm TE,TM,TEM}\}\) and each mode index \(i\), define the global interface vector
\[
\mathbf R^{{\rm T},n}(i)
:=
\begin{bmatrix}
R_i^{{\rm T},-,n,1}\\
R_i^{{\rm T},+,n,1}\\
\vdots\\
R_i^{{\rm T},-,n,N}\\
R_i^{{\rm T},+,n,N}
\end{bmatrix}
\in \mathbb C^{2N}.
\]
Then \eqref{eq:R-iteration} is equivalent to the recurrence
\[
\mathbf R^{{\rm T},n+1}(i)
=
I^{\rm T}(i)\,\mathbf R^{{\rm T},n}(i),
\]
where \(I^{\rm T}(i)\in\mathbb C^{2N\times 2N}\) is the block tridiagonal matrix
\[
I^{\rm T}(i)
=
\begin{bmatrix}
0_{2\times2} & K_{\rm T}^+(i) &        &        \\
K_{\rm T}^-(i) & 0_{2\times2} & \ddots &        \\
        & \ddots & \ddots & K_{\rm T}^+(i) \\
        &        & K_{\rm T}^-(i) & 0_{2\times2}
\end{bmatrix}.
\]
Since the blocks \(K_{\rm T}^\pm(i)\) do not depend on the subdomain index \(l\), the matrix \(I^{\rm T}(i)\) has a block Toeplitz structure. This is the key property that allows one to study convergence and weak scalability through the spectrum of \(I^{\rm T}(i)\) as the number of subdomains increases.

\subsubsection{Limiting spectrum and weak scalability}

For each fixed mode \(i\) and each modal family \({\rm T}\in\{{\rm TE,TM,TEM}\}\), Proposition~\ref{prop:R-ite} shows that the Schwarz iteration is governed by a block Toeplitz matrix \(I^{\rm T}(i)\). We may therefore apply the limiting-spectrum analysis of \cite{Bootland:2022:APS}.

\begin{theorem}[Limiting convergence factor]\label{thm:spectre-limit}
Assume that \(a_i^{\rm T}\neq 0\) and \(b_i^{\rm T}\neq 0\). Then the spectral radius of the modal iteration matrix \(I^{\rm T}(i)\) satisfies
\[
\lim_{N\to+\infty}\rho\bigl(I^{\rm T}(i)\bigr)
=
\max\Bigl(
|a_i^{\rm T}+b_i^{\rm T}|,
\,
|a_i^{\rm T}-b_i^{\rm T}|
\Bigr),
\qquad
{\rm T}\in\{{\rm TE,TM,TEM}\}.
\]
In particular, as the number of subdomains tends to infinity, the convergence factor of each modal iteration tends to a constant. If this limiting constant is strictly smaller than \(1\) for all modes, then the one-level Schwarz method is weakly scalable.
\end{theorem}

\subsubsection{Transparent limit and nilpotency}

The limiting-spectrum formula of Theorem~\ref{thm:spectre-limit} requires \(a_i^{\rm T}\neq 0\). In the transparent limit, however, one has \(a_i^{\rm T}=0\), so the theorem no longer applies. In that case, the convergence can nevertheless be characterized explicitly: the iteration matrix becomes nilpotent. This occurs, in particular, for an infinite PML, which yields the exact Dirichlet-to-Neumann operator.

\begin{proposition}[Nilpotency in the transparent limit]\label{prop:nilpotent}
Assume that the transmission operator \(\mathcal T\) is the exact Dirichlet-to-Neumann operator, equivalently the limit of a PML with \(\ell\to+\infty\). Then, for every modal family \({\rm T}\in\{{\rm TE,TM,TEM}\}\) and every mode index \(i\),
\[
\lambda_i^{\rm T}=-\eta_i^{\rm T},
\qquad
a_i^{\rm T}=0,
\qquad
b_i^{\rm T}=e^{-\imath \beta_i^{\rm T}(L+\delta)}.
\]
Moreover, the modal iteration matrix satisfies
\[
\bigl(I^{\rm T}(i)\bigr)^N=0,
\]
where \(N\) is the number of subdomains.
\end{proposition}
This result is consistent with the classical interpretation of transparent boundary conditions: when the exact outgoing behavior is imposed at each interface, information propagates across at most one subdomain per Schwarz iteration, so the algorithm converges in at most \(N\) steps.

\begin{proof}
In the transparent limit \(\ell\to+\infty\) with \(\sigma>0\), the PML symbols satisfy
\(
\lambda_i^{\rm T}=-\eta_i^{\rm T}.
\)
Substituting this into \eqref{eq:aT}--\eqref{eq:bT} immediately gives
\[
a_i^{\rm T}=0,
\qquad
b_i^{\rm T}=e^{-\imath \beta_i^{\rm T}(L+\delta)}.
\]

It remains to prove that \(\bigl(I^{\rm T}(i)\bigr)^N=0\). Let \(\mathbf R\in\mathbb C^{2N}\). Since \(a_i^{\rm T}=0\), the block matrices in Proposition~\ref{prop:R-ite} reduce to
\[
K_{\rm T}^+(i)=
\begin{bmatrix}
0&0\\
0&b_i^{\rm T}
\end{bmatrix},
\qquad
K_{\rm T}^-(i)=
\begin{bmatrix}
b_i^{\rm T}&0\\
0&0
\end{bmatrix}.
\]
Hence, in the global iteration matrix \(I^{\rm T}(i)\), each nonzero component is shifted by one subdomain at each iteration: left-going components propagate to the left, right-going components propagate to the right, and no coupling remains between the two directions. More explicitly, for \(l=1,\dots,N-1\),
\[
\bigl(I^{\rm T}(i)\mathbf R\bigr)_{2l+1}
=
b_i^{\rm T}\,\mathbf R_{2l-1},
\qquad
\bigl(I^{\rm T}(i)\mathbf R\bigr)_{2l}
=
b_i^{\rm T}\,\mathbf R_{2l+2},
\]
while the two extreme components 
\(
\bigl(I^{\rm T}(i)\mathbf R\bigr)_1=0\), \(\bigl(I^{\rm T}(i)\mathbf R\bigr)_{2N}=0\) vanish.

Therefore, after each multiplication by \(I^{\rm T}(i)\), one additional pair of boundary components becomes zero. After \(N\) iterations, all components vanish, i.e.
\[
\bigl(I^{\rm T}(i)\bigr)^N\mathbf R=0
\qquad
\forall \mathbf R\in\mathbb C^{2N}.
\]
This proves that \(\bigl(I^{\rm T}(i)\bigr)^N=0\).
\end{proof}
Thus, the generic regime is governed by the limiting spectrum of a block Toeplitz matrix, while the transparent limit corresponds to a more favorable situation in which the Schwarz iteration converges in finitely many steps.

\subsection{Illustration of the limiting spectrum}
\label{subsec:limspec}

Here we illustrate the \emph{limiting spectrum}
prediction of Theorem~\ref{thm:spectre-limit} by computing the spectral radius of the
per–mode iteration matrices \(I^{\mathrm T}(i)\) and comparing it with the theoretical
limit as the number of subdomains increases. These experiments are performed at the
\emph{modal level}: no spatial discretization is involved, and the matrices
\(I^{\mathrm T}(i)\) are constructed directly from the analytical expressions of the
coefficients \(a_i^{\mathrm T}\) and \(b_i^{\mathrm T}\). The goal is therefore to
verify the limiting-spectrum result itself. Only in a later section, dedicated to the numerical results, we report weak and strong scalability results for
fully discretized 3D Maxwell problems with several transmission conditions and overlaps.

In the following we will examine how fast the spectral radius of the per–mode continuous iteration matrix
\(I^{\mathrm T}(i)\) converges to the limiting prediction as the number of subdomains
\(N\) increases. We fix \(k=10\), a core length \(L=1\), and overlap \(\delta=0.1\). {For the illustrations below, we have replaced the discrete index $i$ by a continuous variable $r$, which simply amounts to compute $I^{\mathrm T}(r)$ where in the expression of $I^{\rm T}(i)$ we replaced the eigenvalues $\lambda^\mathcal{D}_i$ and $\lambda^{\mathcal{N}}_i$ by a positive real value $r$ (for TE and TM modes). In that case, according to Corollary \ref{cor:MH-dictionary}, we have $I^{\rm TE}(r) = I^{\rm TM}(r)$. For TEM modes, let us mention that the results are the same as for TE modes taking $r = 0$.}

\paragraph{Impedance transmission}
Figure~\ref{fig3:convFactTE-TM} shows \(\rho\bigl(I^{\rm TE}(r)\bigr)\) (left) and
\(\rho\bigl(I^{\rm TM}(r)\bigr)\) (right) for \(N\in\{5,10,\dots,35\}\) with the impedance
condition \eqref{def:op-T-ik}. The dashed line is the limiting spectrum of
Theorem~\ref{thm:spectre-limit}. Convergence to the limit is rapid for evanescent modes
(\(r\ge k\)), and remains fast for propagative modes. {Let us remark that, as mentioned above, both spectral radii are the same for TE and TM modes. Therefore, for the next results, we only show the results for TE modes.}

\begin{figure}[h]
  \centering
  \includegraphics[height=4.3cm]{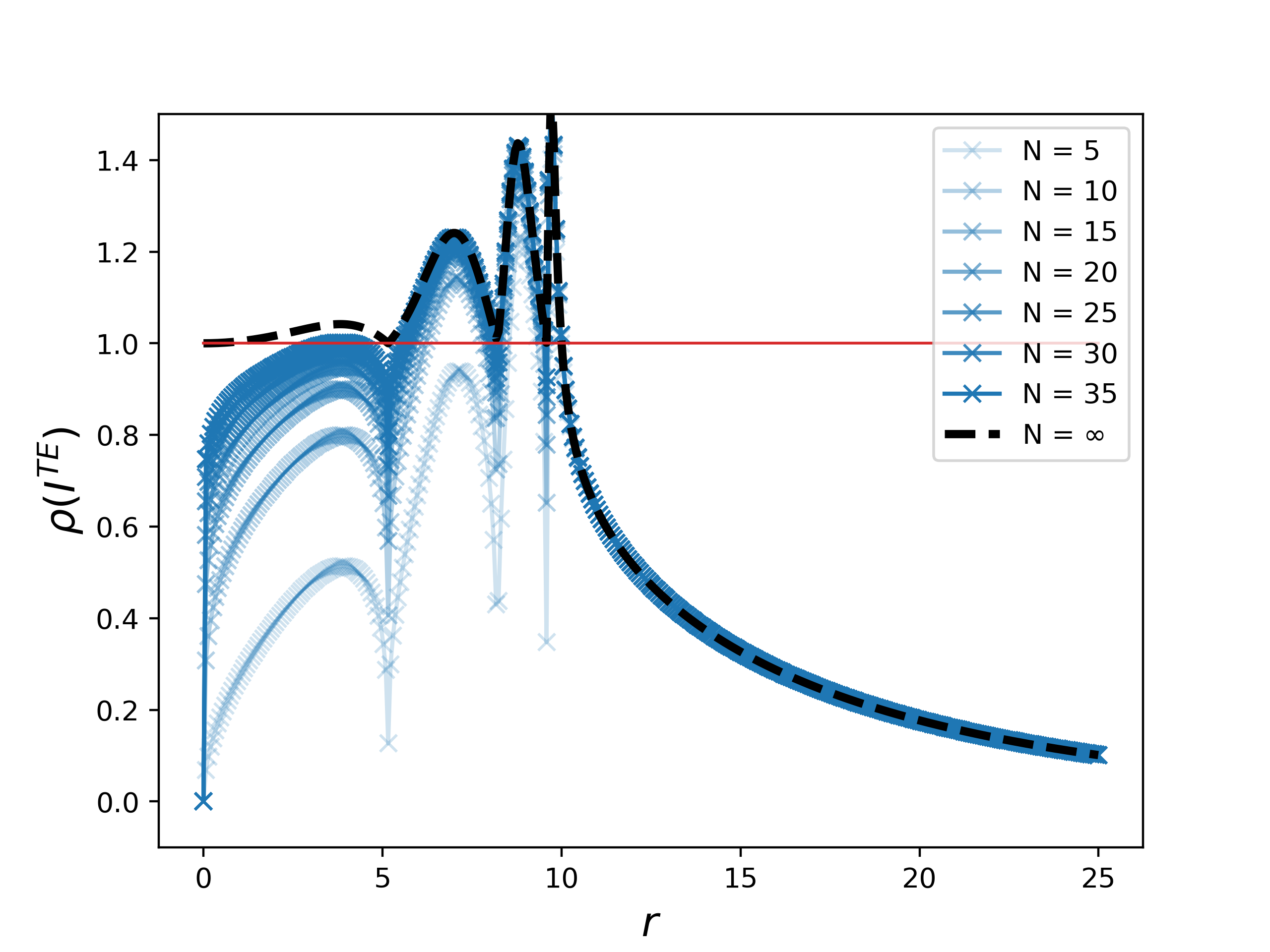}\hfill
  \includegraphics[height=4.3cm]{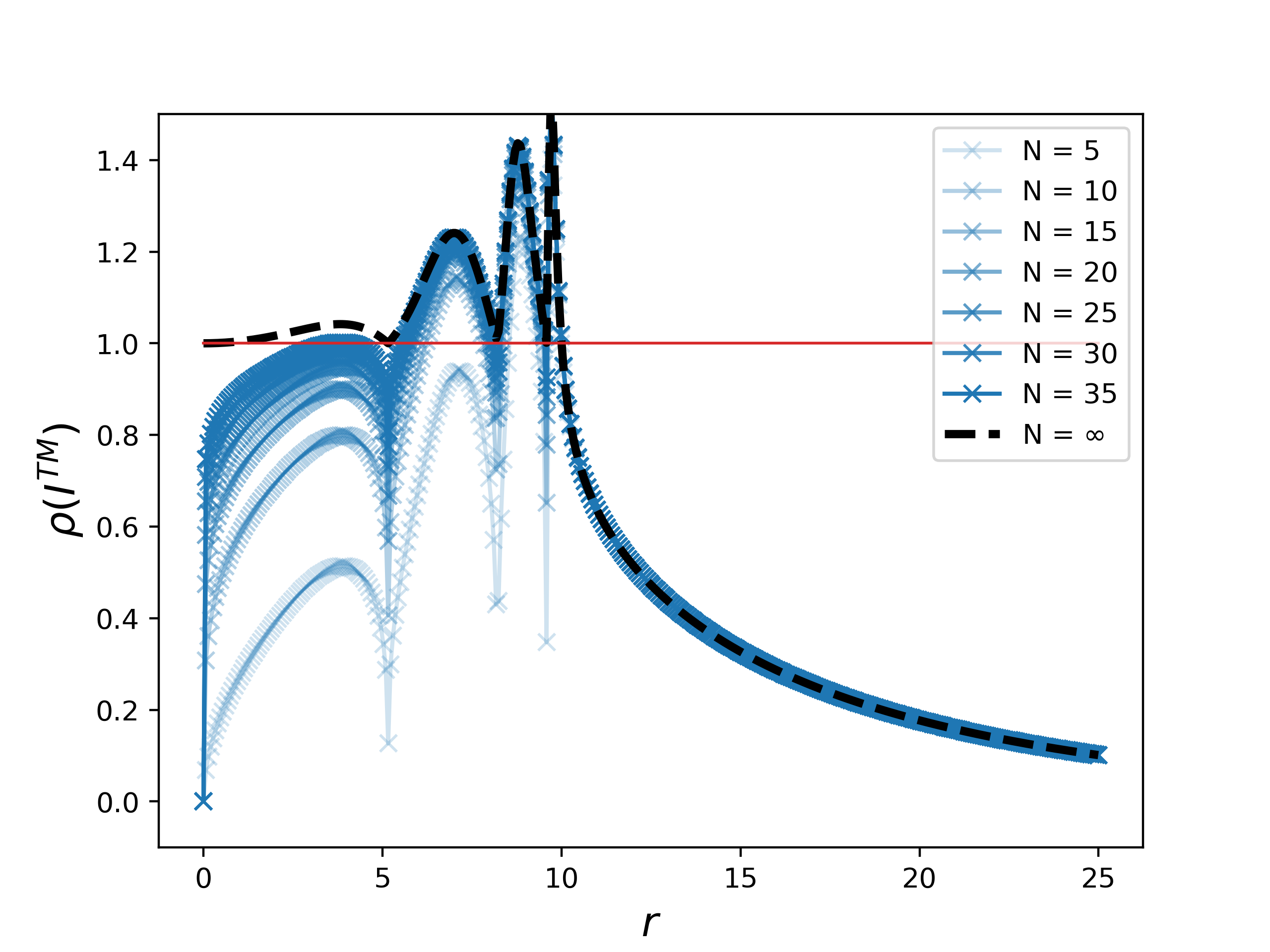}
  \caption{Spectral radius of the iteration matrix for TE (left) and TM (right) versus \(r\ge 0\)
  with impedance transmission \eqref{def:op-T-ik}. Dashed: limiting spectrum.}
  \label{fig3:convFactTE-TM}
\end{figure}

\paragraph{PML transmission}
With a right/left PML of length \(\ell\) and stretch \(\sigma\),
Figure~\ref{fig4:convFactTE-TM-PML} reports \(\rho\bigl(I^{\rm T}(r)\bigr)\) for a ``weak'' PML
(\(\ell=0.1,\ \sigma=5\)) and an almost-transparent PML (\(\ell=1,\ \sigma=10\)). PML yields a
smaller spectral radius than impedance, especially for propagative modes and small \(N\).
When the PML is very strong, convergence to the limiting spectrum is slower for propagative
modes—consistent with the near-nilpotent regime \(a_i^{\mathrm T}\approx 0\), where the limiting
spectrum formula does not apply.

\begin{figure}[h]
  \centering
  \includegraphics[height=4.3cm]{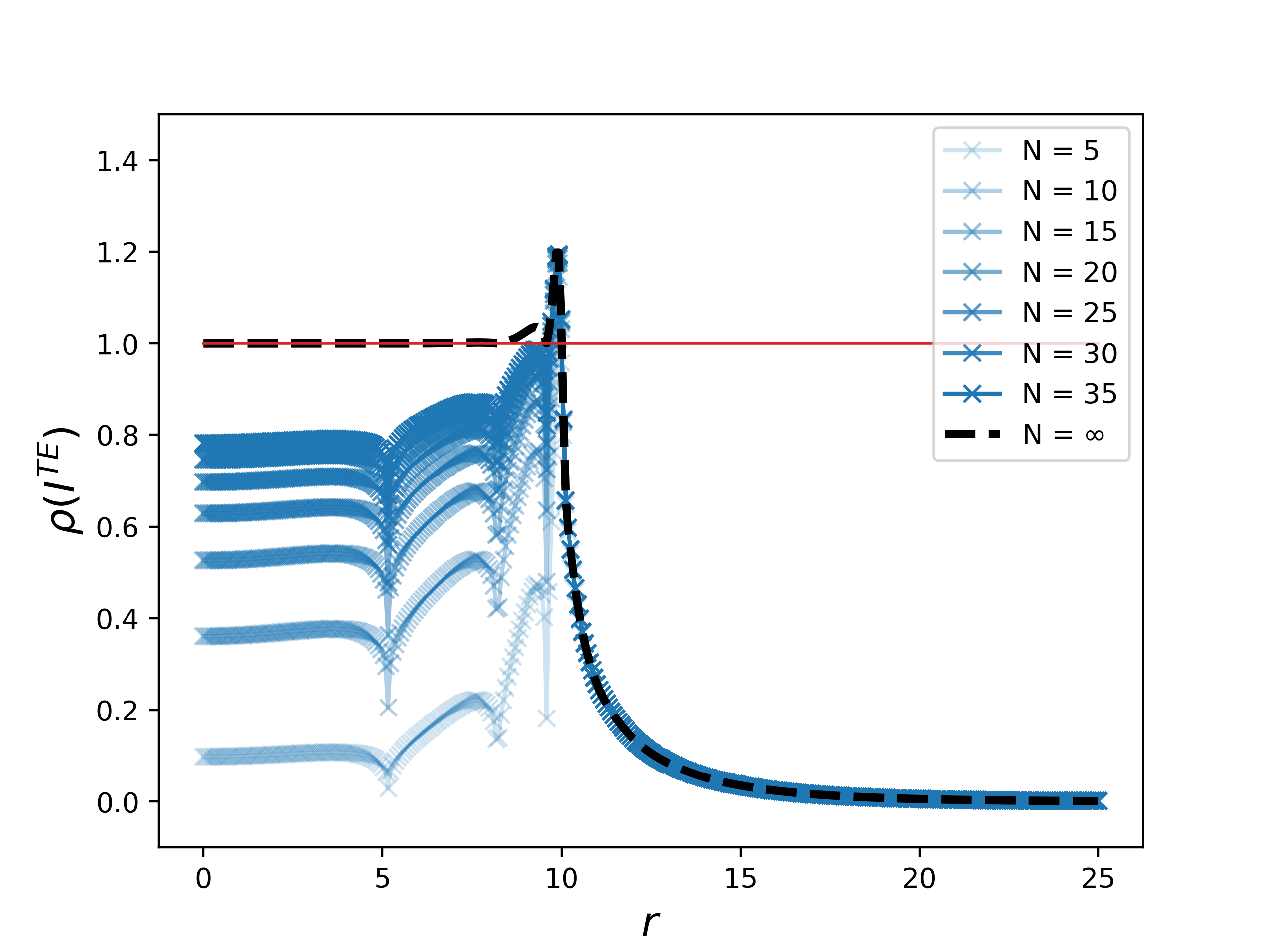}\hfill
  \includegraphics[height=4.3cm]{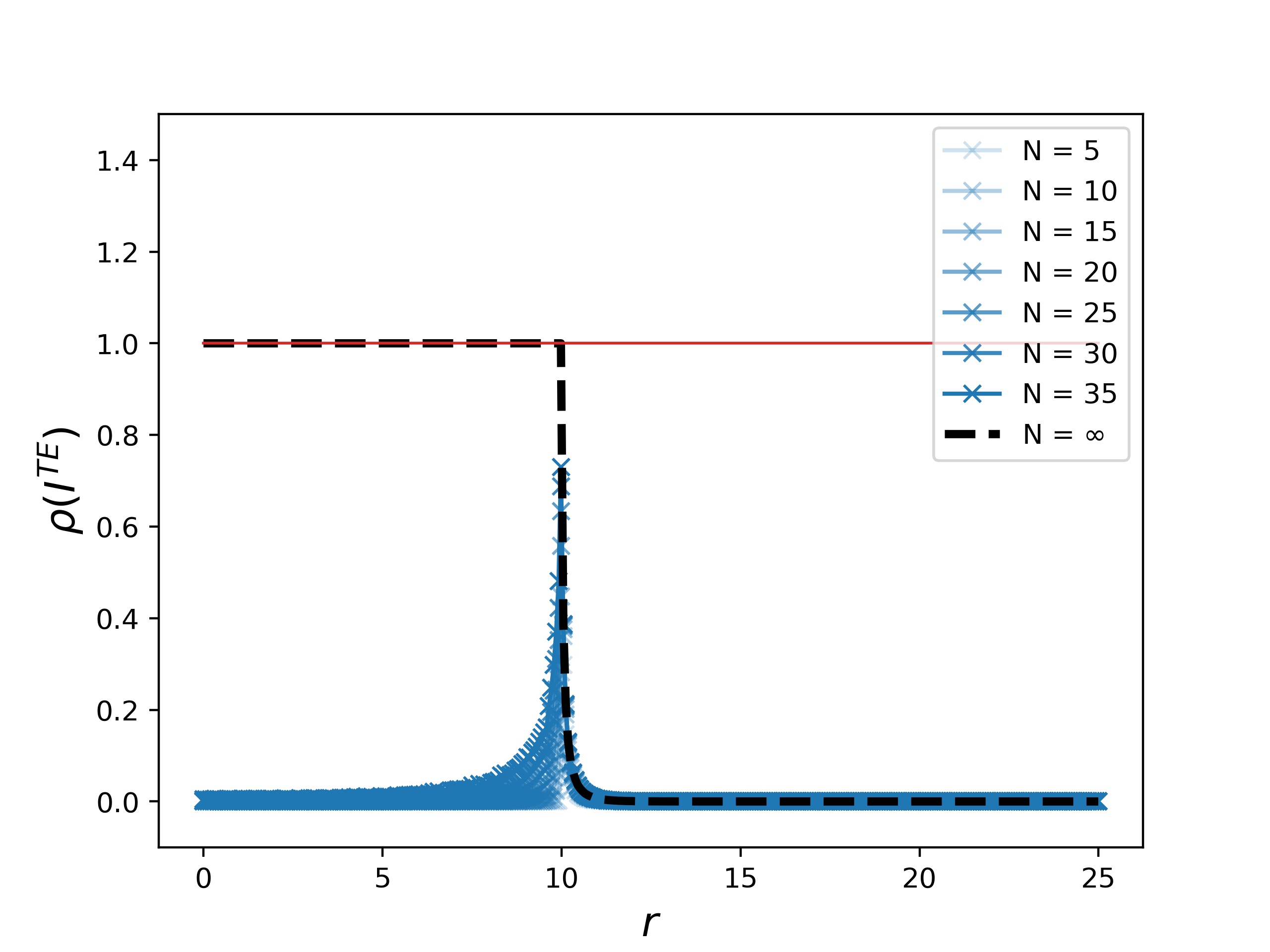}
  \caption{Spectral radius \(\rho\bigl(I^{\rm TE}(r)\bigr)\) with PML transmission.
  Left: \(\ell=0.1,\ \sigma=5\). Right: \(\ell=1,\ \sigma=10\). Dashed: limiting spectrum.}
  \label{fig4:convFactTE-TM-PML}
\end{figure}

\paragraph{Effect of complex frequency (damping)}
We next set \(k=10+\imath\) to model absorptive media (\(\varepsilon''>0\)). As predicted by the
theory, even modest damping drastically reduces \(\rho\bigl(I^{\rm T}(r)\bigr)\); see
Figure~\ref{fig5:convFactTE-kcomplex} for impedance (left) and PML (right). Here the limiting
spectrum is strictly less than one for all \(r\), implying weak scalability.

\begin{figure}[h]
  \centering
  \includegraphics[height=4.3cm]{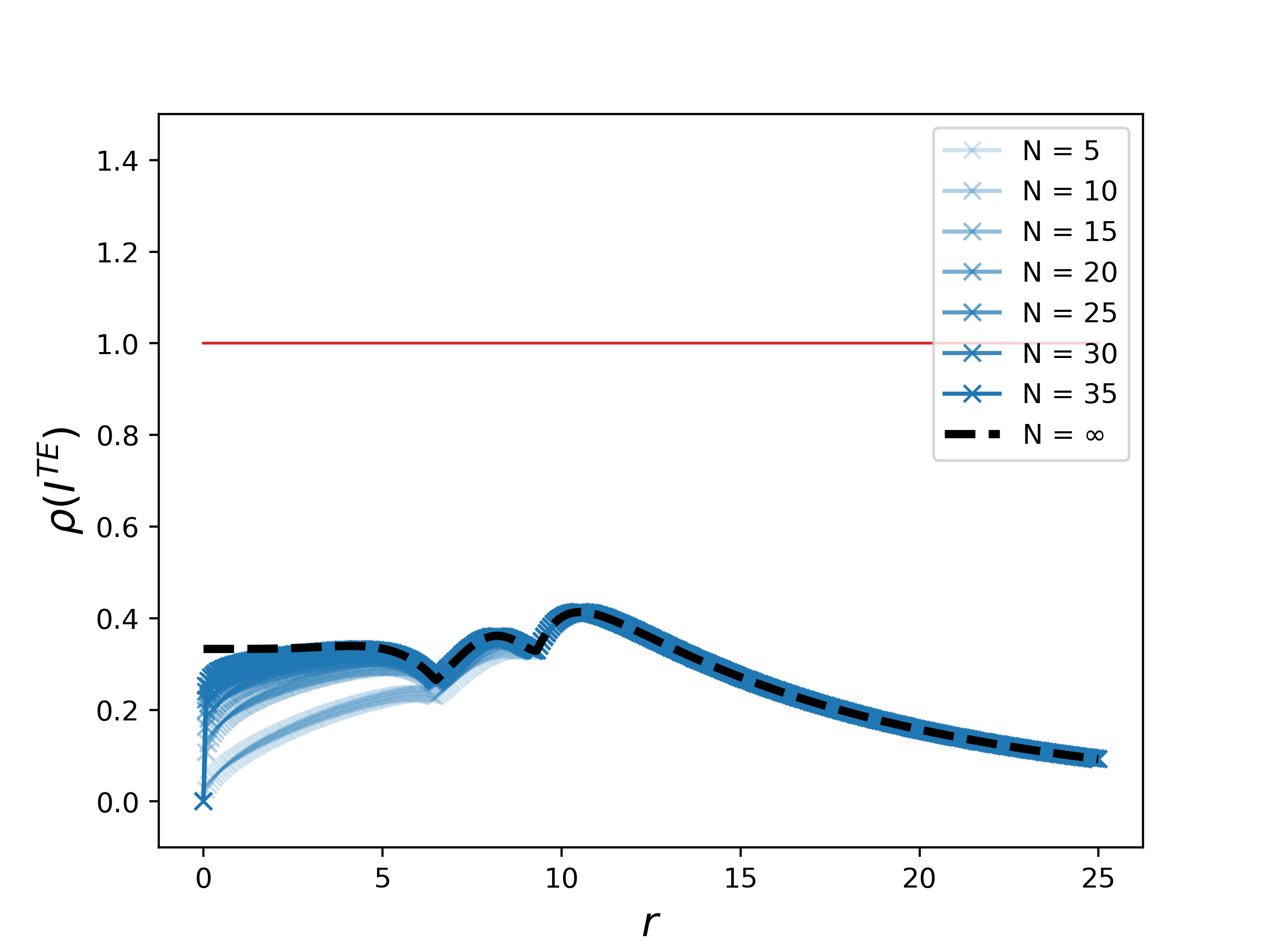}\hfill
  \includegraphics[height=4.3cm]{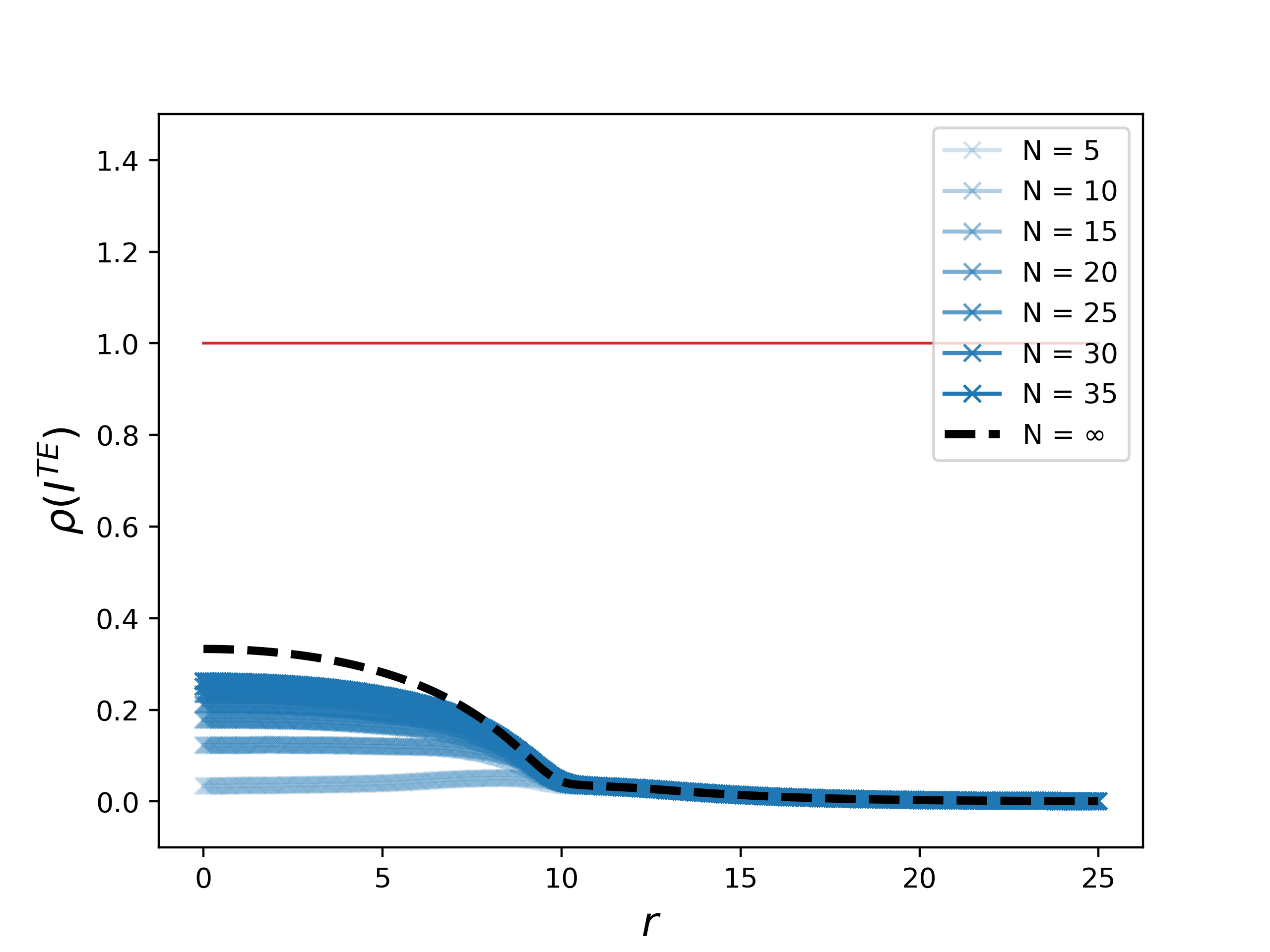}
  \caption{Spectral radius \(\rho\bigl(I^{\rm TE}(r)\bigr)\) with \(k=10+\imath\) using
  impedance (left) and PML (right) with parameters \(\ell=0.1, \sigma=5\). Dashed: limiting spectrum.}
  \label{fig5:convFactTE-kcomplex}
\end{figure}

\paragraph{Takeaways}
\begin{itemize}
  \item Convergence to the limiting spectrum is rapid for evanescent modes and remains fast for
        propagative modes under impedance transmission.
  \item PML transmission reduces the spectral radius overall; very strong PML enters a
        near-nilpotent regime where the limiting-spectrum result is not informative.
  \item Complex frequency (absorption) makes the method weakly scalable: the limiting spectral
        radius is uniformly \(<1\).
\end{itemize}

\paragraph{Interpretation.}
These modal experiments confirm the theoretical analysis. The spectral radius of the
block Toeplitz iteration matrix converges rapidly to the limiting value predicted by
Theorem~\ref{thm:spectre-limit}. Moreover, the behavior of the method is governed by the
modal coefficients \(a_i^{\rm T}\) and \(b_i^{\rm T}\): impedance transmission leads to
a limiting convergence factor close to one for propagative modes, whereas PML
transmission reduces the spectral radius and, in the near-transparent regime
\(a_i^{\rm T}\approx 0\), the iteration becomes nearly nilpotent, explaining the very
fast convergence observed in that case.

\section{Numerical experiments}
\label{sec:numeric}

This section complements the modal analysis of Section~\ref{sec:DD} with fully discretized
three-dimensional experiments. Our goal is twofold. First, we assess how the trends predicted by
the modal stationary analysis---in particular the influence of transmission conditions, overlap,
and damping on weak scalability---carry over to practical solvers. Second, we clarify the relation
between the idealized Schwarz iteration studied in the theory and the optimized restricted additive Schwarz (ORAS)-preconditioned GMRES
solver used in the computations. 

\paragraph{From the stationary Schwarz iteration to ORAS-preconditioned GMRES.}
The analysis in Section~\ref{sec:DD} concerns the \emph{stationary} one-level Schwarz iteration.
At the continuous modal level, each mode is propagated by the matrix \(I^{\rm T}(i)\), and the
spectral radius of this matrix governs the convergence of the pure fixed-point algorithm.

In the fully discretized setting, however, we do not use this stationary iteration as a solver.
Instead, we use one Schwarz sweep as a \emph{preconditioner} inside GMRES. More precisely, let
\(A_h\) be the global finite element matrix associated with the discretized Maxwell problem. The
ORAS preconditioner defines a linear operator \(M_{ORAS}^{-1}\), and the actual solver is
right-preconditioned GMRES applied to
\[
A_h M_{ORAS}^{-1} \mathbf y = \mathbf b,
\qquad
\mathbf x = M_{ORAS}^{-1}\mathbf y.
\]
Thus, the iteration counts reported below measure the quality of the Schwarz preconditioner rather
than the convergence of the stationary Schwarz method itself. The two points of view are nevertheless closely related. The stationary Schwarz iteration for the
error may be written abstractly as
\[
\mathbf e^{m+1} = \bigl(I-M^{-1}A\bigr)\mathbf e^m,
\]
where \(M^{-1}\) denotes one Schwarz sweep. In the modal continuous analysis, this error
propagation operator reduces mode by mode to the matrices \(I^{\rm T}(i)\). In the discrete
setting, ORAS is the finite-dimensional realization of the same idea: it combines local subdomain
solves with the same transmission conditions and overlap pattern, together with a partition of
unity. Therefore, although the theoretical results rigorously concern the stationary iteration,
they remain informative for ORAS-preconditioned GMRES because they predict when one Schwarz sweep
is a good or poor approximation of the inverse operator.

In particular, the theory also explains why the pure one-level stationary algorithm is not used as
a practical solver here. In the weakly damped or undamped regimes, the spectral radius of the modal
iteration matrix is often close to \(1\), and may even exceed \(1\) for some modes, so the
stationary iteration is either very slow or divergent. GMRES is therefore used to stabilize and
accelerate the iteration, while ORAS provides the underlying domain decomposition preconditioning.

\paragraph{Discrete setting.}
We now consider fully discretized problems. The global solver is right-preconditioned GMRES with
relative tolerance \(10^{-5}\), preconditioned by ORAS with overlap \(\delta\) and either impedance or
PML transmission conditions at subdomain interfaces. The mesh uses lowest-order N\'ed\'elec
elements with approximately 10 points per wavelength.

\paragraph{ORAS preconditioner.}
For all experiments we use the ORAS preconditioner
\[
M_{ORAS}^{-1}= \sum_{j=1}^{N} R_j^T D_j A_j^{-1} R_j,
\]
where \(R_j:\Omega\to\Omega_j\) is the restriction operator, \(R_j^T:\Omega_j\to\Omega\) the
prolongation operator, and \(D_j\) is the partition of unity, satisfying
\(
\sum_{j=1}^{N} R_j^T D_j R_j = \text{Id}.
\)
The local matrices \(A_j\) are the discretizations of the subdomain problems
\begin{equation}\label{pb:Schwarz-DD}
	\left\{\begin{array}{lcl}
		\mathcal{L} (\mathbf{E}) = 0 & \text{in} & \Omega_j,\\
		\mathcal{B}(\mathbf{E}) = 0 & \text{on} & \Gamma_{j},\\
		\mathbf{E} \times \mathbf{n} = 0 & \text{on} & \partial \Omega_j \setminus \Gamma_{j}.
	\end{array}\right.
\end{equation}
Hence the local solves entering ORAS are precisely the discrete counterparts of the local Schwarz
problems analyzed in Section~\ref{sec:DD}. (see also \cite{StCyr:2007:OMA} for an explanation on the continuous-discrete link of the Schwarz iterations). 

\paragraph{Overlap and discretization.}
In the finite element setting, the overlap width is specified by the number \(s\) of layers of mesh
elements added to each subdomain. Starting from a non-overlapping partition, an overlap level
\(s\) means that each subdomain is enlarged by \(s\) element layers, so that the geometric overlap
between neighboring subdomains is \(\delta = 2hs\), where \(h\) is the mesh size.

The numerical experiments are performed using the open-source finite element software
FreeFEM~\cite{Hecht:2012:FF}. The implementation uses the \textit{ffddm} framework
\cite{Tournier:2019:FFD}, which provides high-level parallel domain decomposition tools in
FreeFEM.

\paragraph{Geometry, decomposition, and damping parameter.}
Unless otherwise stated, the computational domain is a rectangular waveguide of size
\(1\times 1\times N\), decomposed into \(N\) strip-like subdomains of equal axial length. We
report GMRES iteration counts as functions of the number of subdomains \(N\), for several
real wavenumbers \(k_r := \text{Re}(k)\) and several values of an absorption parameter
\[
k_d := \operatorname{Im}(k)\in\left\{0,\frac{1}{k_r},1,k_r\right\}.
\]
To avoid confusion with the PML stretch parameter $\sigma$ (usually used for the conductivity), we consistently denote the material damping by
\(k_d\). In the experiments, \(k_d\) is introduced as a complex shift in the material and plays the
same role as the complex frequency considered in Section~\ref{subsec:limspec}.

\paragraph{How to read the tables.}
Each table is organized as follows. The rows correspond to the amount of damping \(k_d\), with the
three subrows giving the wavenumbers \(k_r=5,10,15\). The columns correspond to the number of
subdomains \(N\). Large growth of the iteration count with \(N\) indicates poor weak scalability;
nearly constant iteration counts indicate good weak scalability. When \textsf{DNC} appears, GMRES
did not reach the prescribed tolerance within 2000 iterations, which signals that the underlying
one-level preconditioner is too weak in that regime.

\subsection{3D weak scalability}
\label{subsec:weak3d}

We first study weak scalability by increasing the length of the waveguide together with the number
of subdomains, so that each subdomain keeps the same physical size. This is the discrete setting
closest to the modal theory of Section~\ref{sec:DD}. In particular, the rows with \(k_d=0\)
correspond to the most challenging case, where the stationary one-level Schwarz iteration is not
expected to converge robustly; the GMRES iteration counts therefore measure how much the Krylov
acceleration compensates for this lack of contractivity.

\paragraph{Impedance vs.\ PML; overlap \(s=2\)}
Tables~\ref{tab:weak-imp-o2} and~\ref{tab:weak-pml-o2} compare impedance and PML transmission for
overlap \(s=2\). Several observations should be emphasized. First, in the undamped case \(k_d=0\), the iteration count grows strongly with \(N\), especially
for the larger wavenumbers. This is consistent with the modal analysis: without damping, the
one-level stationary Schwarz method is not contractive enough to yield weak scalability, and in
the most difficult regimes the preconditioned GMRES solver also eventually fails to converge within
the iteration cap. Second, as soon as damping is introduced, the iteration counts become much less sensitive to
\(N\). For \(k_d=1\) and \(k_d=k_r\), they are essentially flat, which is the discrete
counterpart of the modal prediction that the limiting convergence factor becomes uniformly smaller
than one. Third, PML transmission improves the preconditioner relative to impedance transmission, most
clearly in the low- and moderately damped regimes. This mirrors the smaller modal convergence
factors observed in Section~\ref{subsec:limspec}.

\begin{table}[h]
\centering
\setlength{\tabcolsep}{3pt}
\renewcommand{\arraystretch}{1.05}
\caption{GMRES iterations (tol \(10^{-5}\)) for a rectangular guide \(1\times 1\times N\), overlap \(2\),
impedance transmission. Initial guess: $\mathrm{TE}_{10}$ mode.}
\label{tab:weak-imp-o2}
\begin{tabular}{|c||c|c|c|c|c|c|c|c|c|}
\hline
$k_d$ & $ k_r \backslash N$ & 5 & 10 & 20 & 40 & 60 & 80 & 100 & 120 \\
\hline
 & 5 & 29 & 64 & 131 & 269 & 412 & 538 & 682 & 798 \\
$0$ & 10 & 64 & 165 & 413 & 960 & 1422 & 1909 & DNC & DNC \\
 & 15 & 46 & 106 & 221 & 442 & 794 & 1163 & 1530 & 1756 \\
\hline\hline
 & 5 & 24 & 41 & 64 & 89 & 94 & 100 & 110 & 112 \\
$\frac{1}{k_r}$ & 10 & 36 & 67 & 114 & 171 & 217 & 253 & 281 & 288 \\
 & 15 & 37 & 76 & 136 & 231 & 274 & 338 & 399 & 462 \\
\hline\hline
 & 5 & 12 & 17 & 20 & 22 & 22 & 22 & 22 & 22 \\
$1$ & 10 & 14 & 20 & 24 & 28 & 28 & 28 & 28 & 28 \\
 & 15 & 14 & 20 & 23 & 29 & 29 & 29 & 29 & 29 \\
\hline\hline
 & 5 & 6 & 6 & 6 & 6 & 7 & 7 & 7 & 7 \\
$k_r$ & 10 & 5 & 5 & 6 & 6 & 6 & 6 & 6 & 6 \\
 & 15 & 5 & 5 & 6 & 6 & 6 & 6 & 6 & 6 \\
\hline
\end{tabular}
\end{table}

\begin{table}[h]
\centering
\setlength{\tabcolsep}{3pt}
\renewcommand{\arraystretch}{1.05}
\caption{GMRES iterations (tol \(10^{-5}\)) for a rectangular guide \(1\times 1\times N\), overlap \(2\),
PML transmission. Initial guess: $\mathrm{TE}_{10}$ mode.}
\label{tab:weak-pml-o2}
\begin{tabular}{|c||c|c|c|c|c|c|c|c|c|}
\hline
$k_d$ & $ k_r \backslash N$ & 5 & 10 & 20 & 40 & 60 & 80 & 100 & 120 \\
\hline
 & 5 & 30 & 65 & 135 & 287 & 427 & 554 & 691 & 805 \\
$0$ & 10 & 58 & 136 & 306 & 682 & 1041 & 1458 & 1688 & DNC \\
 & 15 & 34 & 69 & 136 & 259 & 526 & 714 & 916 & 1088 \\
\hline\hline
 & 5 & 22 & 37 & 57 & 78 & 83 & 87 & 98 & 100 \\
$\frac{1}{k_r}$ & 10 & 32 & 56 & 92 & 144 & 178 & 201 & 229 & 240 \\
 & 15 & 28 & 51 & 87 & 146 & 168 & 210 & 245 & 283 \\
\hline\hline
 & 5 & 11 & 17 & 20 & 22 & 22 & 22 & 22 & 22 \\
$1$ & 10 & 13 & 19 & 23 & 28 & 28 & 28 & 28 & 28 \\
 & 15 & 14 & 21 & 24 & 29 & 29 & 29 & 29 & 29 \\
\hline\hline
 & 5 & 6 & 6 & 6 & 6 & 6 & 6 & 6 & 6 \\
$k_r$ & 10 & 5 & 6 & 6 & 6 & 6 & 6 & 6 & 6 \\
 & 15 & 5 & 6 & 6 & 6 & 6 & 6 & 6 & 6 \\
\hline
\end{tabular}
\end{table}

\paragraph{Effect of overlap}
Table~\ref{tab:weak-imp-o4} reports the same weak-scalability experiment as
Table~\ref{tab:weak-imp-o2}, but with impedance transmission and overlap \(s=4\) instead of
\(s=2\). The effect of increasing the overlap is most visible in the undamped and weakly damped
regimes, where the iteration counts are systematically reduced. For example, at \(k_d=0\) and
\(k_r=10\), increasing the overlap delays the onset of very poor convergence, although it does not
remove it entirely. This is again consistent with the modal picture: larger overlap improves the
quality of one Schwarz sweep, but without damping it does not fundamentally cure the lack of weak
scalability of the stationary one-level method.

\begin{table}[h]
\centering
\setlength{\tabcolsep}{3pt}
\renewcommand{\arraystretch}{1.05}
\caption{GMRES iterations (tol \(10^{-5}\)) for a rectangular guide \(1\times 1\times N\),
impedance transmission, overlap \(4\). Initial guess: $\mathrm{TE}_{10}$ mode.}
\label{tab:weak-imp-o4}
\begin{tabular}{|c||c|c|c|c|c|c|c|c|c|}
\hline
$k_d$ & $ k_r \backslash N$ & 5 & 10 & 20 & 40 & 60 & 80 & 100 & 120 \\
\hline
 & 5 & 30 & 64 & 131 & 261 & 401 & 526 & 666 & 779 \\
$0$ & 10 & 64 & 150 & 327 & 790 & 1195 & 1769 & DNC & DNC \\
 & 15 & 42 & 95 & 193 & 386 & 710 & 1009 & 1375 & 1592 \\
\hline\hline
 & 5 & 24 & 42 & 65 & 88 & 94 & 100 & 109 & 111 \\
$\frac{1}{k_r}$ & 10 & 37 & 67 & 123 & 191 & 234 & 278 & 316 & 329 \\
 & 15 & 37 & 74 & 132 & 213 & 263 & 316 & 371 & 424 \\
\hline\hline
 & 5 & 13 & 18 & 20 & 22 & 22 & 22 & 22 & 22 \\
$1$ & 10 & 13 & 19 & 23 & 27 & 28 & 28 & 28 & 28 \\
 & 15 & 13 & 19 & 23 & 29 & 29 & 29 & 29 & 29 \\
\hline\hline
 & 5 & 6 & 6 & 6 & 6 & 6 & 6 & 6 & 6 \\
$k_r$ & 10 & 4 & 5 & 5 & 5 & 5 & 5 & 5 & 5 \\
 & 15 & 4 & 4 & 4 & 5 & 5 & 5 & 5 & 5 \\
\hline
\end{tabular}
\end{table}

\paragraph{Sensitivity to the initial guess and to the cross-section}
Tables~\ref{tab:weak-imp-rand} and~\ref{tab:weak-imp-cyl} test the robustness of the previous
observations with respect to the initial guess and the waveguide geometry.

Table~\ref{tab:weak-imp-rand} repeats the impedance/overlap-2 experiment with a random initial
guess instead of the \(\mathrm{TE}_{10}\) mode. The same qualitative trends are observed:
undamped cases deteriorate rapidly with \(N\), while damping restores nearly flat iteration
counts. The random initial guess is generally less favorable, which explains the larger counts in
the difficult regimes, but it does not change the overall interpretation.

Table~\ref{tab:weak-imp-cyl} reports analogous results for a cylindrical waveguide of diameter
\(1\). The mesh of the waveguide decomposed into $N = 10$ subdomains is shown in Figure~\ref{fig:cyl-mesh-dd}. The same pattern is observed there as well: poor weak scalability without damping, strong
improvement under damping, and nearly flat counts in the highly absorptive regime. This shows that
the conclusions are not specific to the rectangular guide used in the modal analysis.

\begin{figure}[h]
	\centering
	\includegraphics[height=3.5cm]{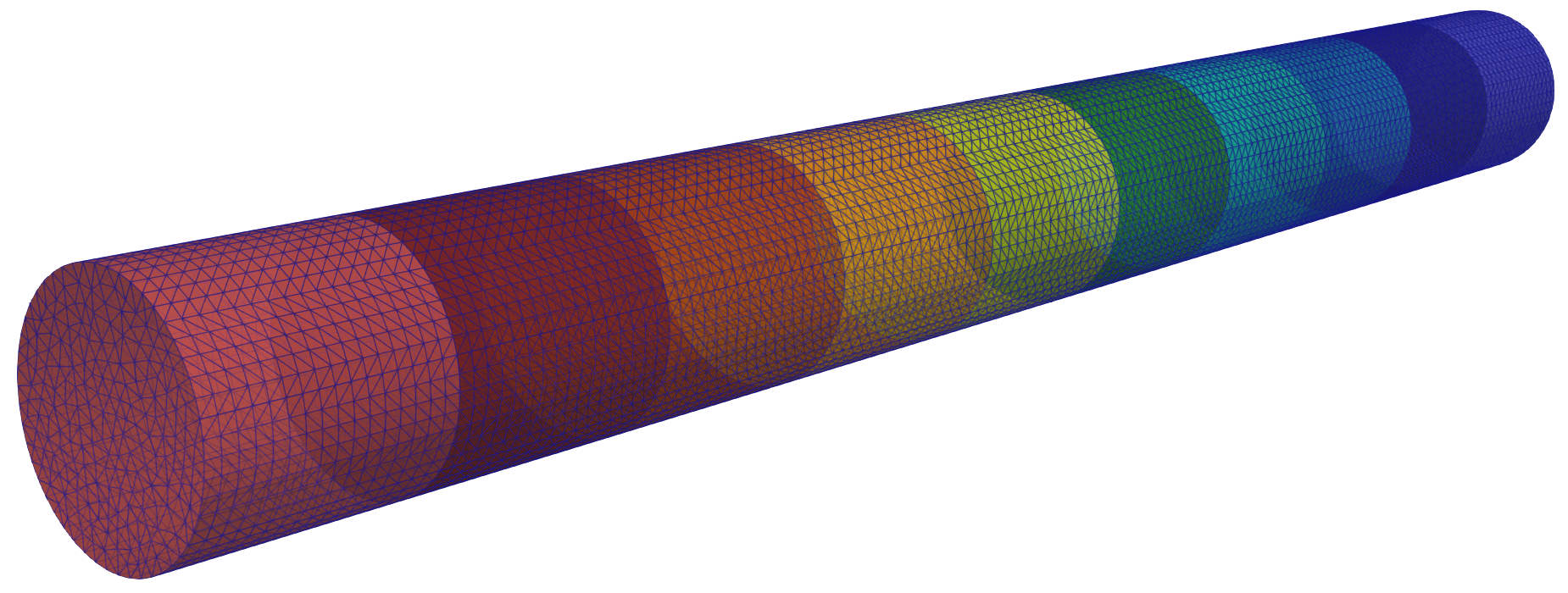}
	\caption{Mesh of the cylindrical waveguide partitioned into $N = 10$ subdomains.}\label{fig:cyl-mesh-dd}
\end{figure}

\begin{table}[h]
\centering
\setlength{\tabcolsep}{3pt}
\renewcommand{\arraystretch}{1.05}
\caption{GMRES iterations (tol \(10^{-5}\)) for a rectangular guide \(1\times 1\times N\),
impedance transmission, overlap \(2\), \emph{random} initial guess.}
\label{tab:weak-imp-rand}
\begin{tabular}{|c||c|c|c|c|c|c|c|c|c|}
\hline
$k_d$ & $ k_r \backslash N$ & 5 & 10 & 20 & 40 & 60 & 80 & 100 & 120 \\
\hline
 & 5 & 33 & 68 & 140 & 286 & 429 & 541 & 692 & 819 \\
$0$ & 10 & 84 & 211 & 514 & 1128 & 1783 & DNC & DNC & DNC \\
 & 15 & 81 & 176 & 368 & 741 & DNC & DNC & DNC & DNC \\
\hline\hline
 & 5 & 28 & 48 & 69 & 93 & 96 & 107 & 114 & 114 \\
$\frac{1}{k_r}$ & 10 & 58 & 101 & 151 & 207 & 246 & 270 & 291 & 304 \\
 & 15 & 67 & 126 & 206 & 311 & 383 & 444 & 500 & 537 \\
\hline\hline
 & 5 & 16 & 19 & 23 & 24 & 24 & 24 & 24 & 24 \\
$1$ & 10 & 20 & 25 & 29 & 31 & 31 & 31 & 31 & 31 \\
 & 15 & 23 & 28 & 32 & 34 & 35 & 35 & 35 & 35 \\
\hline\hline
 & 5 & 7 & 8 & 8 & 8 & 8 & 8 & 8 & 8 \\
$k_r$ & 10 & 7 & 7 & 8 & 8 & 8 & 8 & 8 & 8 \\
 & 15 & 8 & 8 & 8 & 8 & 8 & 8 & 8 & 8 \\
\hline
\end{tabular}
\end{table}

\begin{table}[h]
\centering
\setlength{\tabcolsep}{3pt}
\renewcommand{\arraystretch}{1.05}
\caption{GMRES iterations (tol \(10^{-5}\)) for a cylindrical guide (diameter \(1\), length \(N\)),
impedance transmission, overlap \(2\), random initial guess.}
\label{tab:weak-imp-cyl}
\begin{tabular}{|c||c|c|c|c|c|c|c|c|c|}
\hline
$k_d$ & $ k_r \backslash N$ & 5 & 10 & 20 & 40 & 60 & 80 & 100 & 120 \\
\hline
 & 5 & 30 & 60 & 117 & 228 & 338 & 449 & 560 & 672 \\
$0$ & 10 & 46 & 95 & 187 & 358 & 521 & 686 & 848 & 1024 \\
 & 15 & 91 & 183 & 348 & 683 & 1000 & DNC & DNC & DNC \\
\hline\hline
 & 5 & 26 & 46 & 67 & 87 & 93 & 98 & 102 & 102 \\
$\frac{1}{k_r}$ & 10 & 39 & 72 & 115 & 165 & 199 & 220 & 236 & 248 \\
 & 15 & 74 & 127 & 197 & 302 & 380 & 461 & 530 & 592 \\
\hline\hline
 & 5 & 15 & 18 & 21 & 21 & 21 & 21 & 21 & 21 \\
$1$ & 10 & 19 & 24 & 27 & 29 & 29 & 29 & 29 & 29 \\
 & 15 & 23 & 28 & 31 & 34 & 34 & 34 & 34 & 34 \\
\hline\hline
 & 5 & 7 & 7 & 7 & 7 & 7 & 7 & 7 & 7 \\
$k_r$ & 10 & 7 & 7 & 7 & 8 & 8 & 8 & 8 & 8 \\
 & 15 & 7 & 8 & 8 & 8 & 8 & 8 & 8 & 8 \\
\hline
\end{tabular}
\end{table}

\paragraph{Summary (weak scalability).}
The weak-scalability tables should be interpreted as follows. The one-level stationary Schwarz
iteration analyzed in Section~\ref{sec:DD} is not a robust solver in the undamped regime: its
modal convergence factor is too close to one, and can exceed one for some modes. In the fully
discrete setting, ORAS-preconditioned GMRES is still able to solve many of these problems, but the
iteration counts grow rapidly with the number of subdomains and eventually lead to
\textsf{DNC}. By contrast, absorption and improved transmission conditions make one Schwarz sweep a
much better preconditioner. This is reflected in the nearly flat GMRES iteration counts for \(k_d=1\) and
\(k_d=k_r\), and in the systematic improvement obtained with PML transmission. Overall, the
tables are in good qualitative agreement with the modal theory: damping and more effective
transmission conditions are the mechanisms that restore weak scalability.

\subsection{3D strong scalability (fixed-size domain)}
\label{subsec:strong3d}

We next turn to strong scalability. Here the physical domain is fixed to a rectangular guide of
size \(1\times 1\times 40\), while the number of subdomains \(N\) is increased. This situation is
different from weak scalability: since the physical problem is unchanged, dividing it into more
subdomains introduces more interfaces and therefore tends to degrade the quality of a one-level
Schwarz preconditioner.

Tables~\ref{tab:strong-imp-o0}--\ref{tab:strong-pml-o2} confirm this behavior. In the undamped
case \(k_d=0\), the iteration count grows substantially with \(N\), which is expected for a
one-level method on a fixed domain. Increasing the overlap or replacing impedance transmission by
PML slightly improves the results in most of the cases, but the main stabilization mechanism remains the presence of damping.

More precisely, Table~\ref{tab:strong-imp-o0} gives the baseline impedance results with overlap \(1\). Table~\ref{tab:strong-imp-o2} shows that overlap \(2\) improves the
iteration counts, especially in the less damped regimes. Table~\ref{tab:strong-pml-o2} shows that
PML transmission further improves performance, most clearly for \(k_d=0\) and \(k_d=\frac{1}{k_r}\).
In the strongly damped regime \(k_d=k_r\), all three tables show that the iteration counts
remain very small and only weakly dependent on \(N\).

\begin{table}[h]
\centering
\setlength{\tabcolsep}{3pt}
\renewcommand{\arraystretch}{1.05}
\caption{Strong scalability: rectangular guide \(1\times 1\times 40\), impedance transmission, overlap 1.}
\label{tab:strong-imp-o0}
\begin{tabular}{|c||c|c|c|c|c|c|}
\hline
$k_d$ & $ k_r \backslash N$ & 5 & 10 & 20 & 40 & 80 \\
\hline
 & 5  & 35  & 74  & 138  & 294  & 546  \\
$0$         & 10 & 104 & 250 & 476  & 951  & 1908 \\
         & 15 & 45  & 96  & 187  & 378  & 764  \\
\hline\hline
 & 5  & 12  & 22  & 43   & 85   & 174  \\
$\frac{1}{k_r}$          & 10 & 18  & 35  & 68   & 143  & 282  \\
          & 15 & 21  & 40  & 85   & 182  & 388  \\
\hline\hline
 & 5  & 9   & 9   & 12  & 22  & 43  \\
$1$      & 10 & 11  & 12  & 15  & 28  & 53  \\
      & 15 & 11  & 12  & 16  & 29  & 56  \\
\hline\hline
 & 5  & 6   & 7   & 7  & 8  & 11  \\
$k_r$      & 10 & 7  & 7  & 7  & 7  & 8  \\
      & 15 & 7  & 7  & 7  & 7  & 7  \\
\hline
\end{tabular}
\end{table}

\begin{table}[h]
\centering
\setlength{\tabcolsep}{3pt}
\renewcommand{\arraystretch}{1.05}
\caption{Strong scalability: rectangular guide \(1\times 1\times 40\), impedance transmission, overlap \(2\).}
\label{tab:strong-imp-o2}
\begin{tabular}{|c||c|c|c|c|c|c|}
\hline
$k_d$ & $ k_r \backslash N$ & 5 & 10 & 20 & 40 & 80 \\
\hline
 & 5  & 34  & 74  & 124  & 269  & 504  \\
$0$         & 10 & 93 & 226 & 447  & 960  & 1982 \\
         & 15 & 52  & 112  & 221  & 442   & 897  \\
\hline\hline
 & 5  & 12  & 22  & 43   & 89   & 186  \\
$\frac{1}{k_r}$          & 10 & 18  & 38  & 76   & 171  & 375  \\
          & 15 & 20  & 45  & 105  & 231  & 516  \\
\hline\hline
 & 5  & 7   & 8   & 12  & 22  & 42  \\
$1$      & 10 & 9  & 10  & 15  & 28  & 53  \\
      & 15 & 9  & 10  & 15  & 29  & 57   \\
\hline\hline
 & 5  & 5   & 5   & 5  & 6  & 11  \\
$k_r$      & 10 & 5  & 5  & 5  & 6  & 7  \\
      & 15 & 5  & 5  & 6  & 6  & 6  \\
\hline
\end{tabular}
\end{table}

\begin{table}[h]
\centering
\setlength{\tabcolsep}{3pt}
\renewcommand{\arraystretch}{1.05}
\caption{Strong scalability: rectangular guide \(1\times 1\times 40\), PML transmission, overlap \(2\).}
\label{tab:strong-pml-o2}
\begin{tabular}{|c||c|c|c|c|c|c|}
\hline
$k_d$ & $ k_r \backslash N$ & 5 & 10 & 20 & 40 & 80 \\
\hline
 & 5  & 33  & 73  & 143  & 287  & 503  \\
$0$         & 10 & 69 & 159 & 305  & 682  & 1362 \\
         & 15 & 33  & 68  & 138  & 259   & 496  \\
\hline\hline
 & 5  & 10  & 19  & 39   & 78   & 159  \\
$\frac{1}{k_r}$          & 10 & 17  & 28  & 55   & 144  & 288  \\
          & 15 & 17  & 36  & 70   & 146  & 283  \\
\hline\hline
 & 5  & 6   & 7   & 12  & 22  & 42  \\
$1$      & 10 & 8  & 9  & 15  & 28  & 53  \\
      & 15 & 8  & 9  & 15  & 29  & 56   \\
\hline\hline
 & 5  & 5   & 5   & 5  & 6  & 11  \\
$k_r$      & 10 & 5  & 6  & 6  & 6  & 7  \\
      & 15 & 5  & 6  & 6  & 6  & 6  \\
\hline
\end{tabular}
\end{table}

\paragraph{Summary (strong scalability).}
For a fixed physical domain, increasing the number of subdomains deteriorates the performance of a
one-level Schwarz preconditioner in the low- or zero-damping regime, as expected. The role of
GMRES here is again to accelerate a preconditioner that would be ineffective as a pure stationary
iteration. Overlap and PML transmission improve the situation, but the strongest effect comes from
absorption, which keeps the iteration counts moderate and nearly independent of the decomposition.
These trends are fully consistent with the modal analysis of
Section~\ref{subsec:limspec}: damping reduces the difficult propagative effects, while better
transmission conditions improve the quality of the Schwarz sweep used inside ORAS.

\paragraph{Overall interpretation.}
The numerical results should therefore be read in the following way. The theory developed in
Section~\ref{sec:DD} rigorously characterizes the stationary one-level Schwarz method through its
modal iteration matrices. This stationary method is not intended as a robust standalone solver in
the undamped regime, and the tables clearly reflect this through rapidly growing iteration counts
and non-convergence in the most difficult cases. In practice, one uses the same local Schwarz
solves inside the ORAS preconditioner and then relies on GMRES to stabilize and accelerate the
global iteration. The success or failure of ORAS-preconditioned GMRES is therefore not identical to
the convergence of the stationary Schwarz method, but the same modal mechanisms remain visible:
damping, overlap, and more effective transmission conditions all improve the quality of one Schwarz
sweep and therefore the performance of the preconditioner.

\bibliographystyle{siamplain}
\bibliography{paper.bib}
\end{document}